\newcommand{\rdp}{\mathbb{R}^{d'}}
\newcommand{\prd}{\mathcal{P}^p(\mathbb{R}^{d})}
\title{Stability analysis of mean field type control system with major agent}
\author{Yurii Averboukh}
\date{}
		\email{averboukh@gmail.com}
\begin{document}

\maketitle

\begin{abstract}
The paper is concerned with the study of a control system  consisting of one major agent and  many identical minor agents in the limit case when the number of agents tends to infinity. To study the limiting system we use the mean field approach.  We first prove the existence and uniqueness of the motion for such system consisting of finite dimensional  and mean field type subsystems. The main result of the paper is the stability of the motion  w.r.t. perturbations of dynamics, initial data and controls. To illustrate the general theory, we examine the Stackelberg game where the leader controls the major agent while the follower affects the minor agents. Using the general theory, we show the existence of a solution.
\msccode{49N80, 49J27, 91A65}
\keywords{Mean field type control system; stability; Stackelberg game. }
\end{abstract}

\section{Introduction}

The direct study of the controlled large system is realizable only in some rather specific  cases like one considered by Gabasov, Dmitruk and Kirillova in~\cite{Gabasov}. However, in many cases this problem can be simplified by passing to the limit system consisting of infinitely many agents. This assumption is quite natural in the analysis of economical problems~\cite{Gomes_MFG_applications}, crowd dynamics~\cite{Bellomo2012,Colombo2011,Colombo2005}, control of swarms of robots~\cite{Bullo2009}. The  study of limiting system consisting of infinitely many identical agents is a core concept of the theory of   mean field games~\cite{Huang5,Lions01,Lions02} that examines the system of identical players under assumption that each of them maximizes his/her own payoff,  mean field type control theory~\cite{ahmed_ding_controlld,Andersson_Djehiche_2011, Bensoussan_Frehse_Yam_book, Jimenez_Marigonda_Quincampoix} where the infinite agent system is governed by one decision maker. The close concept is the theory of control of continuity equation~\cite{Pogodaev}. In this case, the system of particles is affected by an external force. The same setting appears in the study of control problems within the probabilistic knowledge on initial conditions~\cite{Cardaliaguet_Quincampoux,Marigonda_Quincampoix}.

The paper is motivated by~\cite{Mean_field_spars_optimal} where the model of system consisting of infinite number of particles affected by the finite number of particle was examined in the case of interaction determined by some potential force. More generally, one can consider a system consisting of two parts. First one is finite dimensional and can be regarded as a major agent who interacts with the minor agents. The latter form the infinite-dimensional subsystem. We assume that the major agent as well as each minor agents can choose their controls. This form of the control system is quite general and can be used for analysis of various optimal control problems. For instance, one can assume that major and minor agents try to achieve a common goal. A different solution concept appears when we assume that the major agent has his/her own payoff while the minor agents play cooperatively to maximize a common payoff. 

In the paper, we primary restrict our attention to the study of quality properties of the mean field type control problem with the major agent considered on the finite time interval. We assume that the dynamics of each agent is given by a ordinary differential equation.
The main results of the paper is the stability of the motion of the examined system w.r.t.  the perturbations of dynamics, initial data and controls. Additionally, we derive the existence and uniqueness of the motion. The general result of the paper is  applied to the Stackelberg game where the leader control the finite-dimensional dynamics, while the follower can choose the individual controls of the minor agents who form the infinite-dimensional subsystem. Here we prove the existence of solution in this case.

The rest of the paper is organized as follows. In Section~\ref{sect:notation}, we present the general notation. Section~\ref{sect:motion_def} is concerned with the definition of the motion in the examined mean field type control system with major agent. The main results (existence and uniqueness theorem as well as stability property) are formulated in Section~\ref{sect:main}. We prove the existence of the motion in Section~\ref{sect:existence}. Section~\ref{sect:uniqueness_stability} is concerned with the analysis of uniqueness and stability of the motion. To this end, we estimate the distance between two motions. This auxiliary result gives both uniqueness and stability. Finally, in Section~\ref{sect:Stackelberg}, we examine the Stackelberg game where the leader controls the major agent and the minor agents are affected by the follower.

\section{General notation}\label{sect:notation}
In the paper, we use the following notation.
\begin{itemize}
	\item If $n$ is an integer number, $X_1,\ldots,X_n$ are sets, $i_1,\ldots,i_k$ are indices from $\{1,\ldots,n\}$, then we denote by $\operatorname{p}^{i_1,\ldots,i_k}$ an projection operator from $X_1\times\ldots X_n$ to $X_{i_1}\times \ldots \times X_{i_k}$, i.e.,
	\[\operatorname{p}^{i_1,\ldots,i_k}(x_1,\ldots,x_n)\triangleq (x_{i_1},\ldots,x_{i_k}).\]
	\item If $(\Omega,\Sigma)$ and $(\Omega',\Sigma')$ are measurable sets, $m$ is a probability on $\Sigma$, $h:\Omega\rightarrow\Omega'$ is a $\Sigma/\Sigma'$-measurable function, then $h\sharp m$ denotes  the push-forward measure defined by the rule: for $\Upsilon\in\Sigma'$,
	\[(h\sharp m)(\Upsilon)\triangleq m(h^{-1}(\Upsilon)).\]
	\item If $(X,\rho_X)$, $(Y,\rho_Y)$ are Polish sets, then $C(X;Y)$   stands for the set of all continuous  functions from $X$ to $Y$. The set of continuous and bounded functions from $X$ to $Y$ is denoted by $C_b(X;Y)$. It is endowed by usual $\sup$ norm. If $Y=\mathbb{R}$, then we will omit the second argument. 
	\item If  $(X,\rho_X)$ is a Polish space, then $\mathcal{M}(X)$ is a set of all Borel measures on $(X,\rho_X)$. We consider on $\mathcal{M}(X)$ the topology of narrow convergence, i.e.,  a sequence $\{m_n\}_{n=1}^\infty\subset \mathcal{M}(X)$ narrowly converges to $m$ if, for every $\phi\in C_b(X)$,
	\[\int_X\phi(x)m_n(dx)\rightarrow \int_X\phi(x)m(dx).\] The narrow convergence is metrizable. There are several metrics those generates this convergence. We will use one described in Appendix~B (see~\eqref{B:intro:metric_narrow}). 
	\item $\mathcal{P}(X)$ denote the set of all Borel probabilities on $X$, i.e.,
	\[\mathcal{P}(X)\triangleq \{m\in\mathcal{M}(X):m(X)=1\}.\]
	\item For two Polish spaces $(X,\rho_X)$ and $(Y,\rho_Y)$ and a measure $m\in\mathcal{M}(X)$, we denote by $\Lambda(X,m,Y)$ the set of measures on $X\times Y$ with marginal distribution on $X$ equal to $m$, i.e., $\alpha\in\mathcal{M}(X\times Y)$ lies in $\Lambda(X,m,Y)$ if, for every $\Upsilon\in \mathcal{B}(X)$, $\alpha(\Upsilon\times Y)=m(\Upsilon)$. Notice that, due to the disintegration theorem, there exists a system of probabilities $\alpha(\cdot|x)\in\mathcal{P}(Y)$ such that, for each $\phi\in C_b(X\times Y)$,
	\[\int_{X\times Y}\phi(x,y)\alpha(d(x,y))=\int_X\int_Y\phi(x,y)\alpha(dy|x)m(dx).\]
	\item If $p\geq 1$, then we denote by $\mathcal{P}^p(X)$ the set of all probabilities on $X$ with finite $p$-th moment, i.e., a probability $m$ lies in $\mathcal{P}^p(X)$ if, for some (equivalently, any) $x_*\in X$, the quantity
	\[\varsigma^p_p(m)\triangleq \int_X\rho_X^p(x,x_*)m(dx)\] is finite. In the following, we will use the designation $\varsigma_p(m)\triangleq [\varsigma_p^p(m)]^{1/p}$.
	\item The space $\mathcal{P}^p(X)$ is endowed with the so called $p$-th Wasserstein metric defined by the following rule: if $m_1,m_2\in\mathcal{P}^p(X)$, then 
	\[W_p(m_1,m_2)\triangleq \Big[\inf_{\pi\in\Pi(m_1,m_2)}\int_{X\times X}\rho_X^p(x_1,x_2)\pi(d(x_1,x_2))\Big]^{1/p}.\] Here $\Pi(m_1,m_2)$ is the set of all plans between $m_1,m_2$ those are  measures $\pi\in\Pi(X\times X)$ such that, for each Borel set $\Upsilon\subset X$, 
	\[\pi(\Upsilon\times X)=m_1(\Upsilon),\ \ \pi(X\times \Upsilon)=m_2(\Upsilon).\] Notice that the convergence in $W_p$ implies the narrow convergence.
	\item If $r>0$, then let  $\Gamma_r$ be  the set of all curves from $[0,r]$ to $\rd$, i.e.,
	\[\Gamma_r\triangleq C([0,r];\rd).\] Below we fix $T>0$, and omit the subindex in the case where $T=r$.
	\item We denote the evaluation operator from $\Gamma_r$ to $\rd$ by $e_t$. It acts by the rule
	\[e_t(x(\cdot))\triangleq x(t).\]
	\item If $c$ is a positive constant, $d$ is a natural number then we denote by $\mathbb{B}^d_c$ the ball $\{x\in \rd:\|x\|\leq c\}$. Further, let  $\mathbb{B}_{p,c}$ denote the set of probabilities $m$ on $\rd$ such that $\varsigma_p(m)\leq c$. 
\end{itemize}

\section{Mean field type control system with the major agent}\label{sect:motion_def}

The key object of the paper is a mean field type control system of minor agents who interacts with a major agents. It is assumed that the state of the minor agent is described by a $d$-dimensional vector, whereas the state of the major agent is given by a $d'$-dimensional vector. The dynamics of each minor agents  obeys the ODE
\begin{equation}\label{system:minor}
	\frac{d}{dt}x(t)=f(t,x(t),m(t),y(t),u(t),v(t)),
\end{equation} while the evolution of the major agent is given by
\begin{equation}\label{system:major}
	\frac{d}{dt}y(t)=g(t,y(t),m(t),v(t)).
\end{equation}
In equations~\eqref{system:minor},~\eqref{system:major}, 
\begin{itemize}
	\item $t\in [0,T]$ is a time, 
	\item $x(t)\in\rd$ stands for the state of a minor agent, 
	\item $m(t)$ is a current distribution of all minor agents, 
	\item $y(t)\in\mathbb{R}^{d'}$ describes the common state of major agent, 
	\item $u(t)\in U$ is a control used by the minor agent; $U$ is a control space for the minor agents;
	\item $v(t)\in V$ is the control of the major agent; $V$ is a control space for the major agent;
	\item $f:[0,T]\times\rd\times \prd\times \mathbb{R}^{d'}\times U\times V\rightarrow\rd$ determines the dynamics of the minor agents;
	\item $g :[0,T]\times \mathbb{R}^{d'}\times \prd\times V\rightarrow\mathbb{R}^{d'}$ is a dynamics function for the major player.
\end{itemize}

  We will assume that 
\begin{equation}\label{assump:f_sum}f(t,x,m,y,u,v)=f_I(t,x,m,y,u)+f_{II}(t,x,m,y,v).\end{equation}

In the paper, we use the concept of distribution of relaxed controls. It is defined in two steps. First, we define the relaxed control. Let $\lambda$ stand for the Lebesgue measure on $[0,T]$. A measure $\xi\in \Lambda([0,T],\lambda,U)$ is called a relaxed control of the minor agent. The set of all relaxed controls of the minor agents is denoted by $\mathcal{U}$. Analogously, $\mathcal{V}\triangleq \Lambda([0,T],\lambda,V)$ is the set of relaxed controls of the major agent.

To illustrate the meaning of the relaxed controls, let us consider the following control system
\[\frac{d}{dt}x(t)=b(t,x(t),u(t)).\] Here, as above $u(t)$ is from the set $U$. If $\xi\in\mathcal{U}$, $x_0$ is an initial state, then the corresponding motion is given by the solution of the following integral equation
\begin{equation}\label{eq:relaxed}x(t)=x_0+\int_{[0,t]\times U}b(\tau,x(\tau),u)\xi(d(\tau,u)).\end{equation}
Notice that, if one use a disintegration, then equation~\eqref{eq:relaxed} takes the form of initial value problem
\[\frac{d}{dt}x(t)=\int_Ub(t,x(t),u)\xi(du|t),\ \ x(0)=x_0.\]

We assume that the minor agents as a whole group use a distribution of relaxed controls. Assume that $m_0\in\mathcal{P}^p(\rd)$ is an initial distribution of minor agents. An element of the  set $\mathcal{A}[m_0]\triangleq \Lambda(\rd,m_0,\mathcal{U})$ is called a distribution of minor agents' controls compatible with the initial distribution $m_0$. 

To define the motion of the whole system we will use the following auxiliary operators. First, we assume that the motion of the major agent $y(\cdot)$ is given as well as the flow of probabilities $m(\cdot)$. If $x_0$ is an initial state of a minor agent, $\xi\in\mathcal{U}$, then we denote by $\mathscr{T}_0[f,m(\cdot),y(\cdot),x_0,\xi,\zeta]$ the function $x(\cdot)$ satisfying
\[\begin{split}
x(t)=x_0+\int_{[0,t]\times U}f_I(\tau,&x(\tau),m(\tau),y(\tau),u)\xi(d(\tau,u))\\&+\int_{[0,t]\times V}f_{II}(\tau,x(\tau),m(\tau),y(\tau),u)\zeta(d(\tau,v)).\end{split}\] Analogously, we define the motion of the major agent under assumption that the motion of the cloud of minor agents is given. If $m(\cdot)$ is a flow of probabilities, $y_0$ is an initial state of the major agent, while $\zeta$ is his/her relaxed control, then we denote by $\mathscr{T}^0[g,m(\cdot),y_0,\zeta]$ the solution of the equation
\begin{equation}\label{eq:y_relaxed}
	y(t)=y_0+\int_{[0,t]\times V}g(\tau,y(\tau),m(\tau),v)\zeta(d(\tau,v)).\end{equation}

\begin{definition}\label{def:motion} Let $m_0$ be an initial distribution of the minor agents, $\alpha\in\mathcal{A}[m_0]$ is the distribution of the minor agents' controls, $y_0$ be an initial state of the major agent, $\zeta$ be his/her relaxed control. A pair $(m(\cdot),y(\cdot))$ is a motion for the system~\eqref{system:minor},~\eqref{system:major} if there exists $\chi\in\mathcal{P}^p(\Gamma)$ such that
	\begin{itemize}
		\item $y(\cdot)=\mathscr{T}^0[g,m(\cdot),y_0,\zeta];$
		\item $m(t)=e_t\sharp\chi$;
		\item $\chi=\mathscr{T}_0[f,m(\cdot),y(\cdot),\cdot,\cdot,\zeta]\sharp\alpha$.
	\end{itemize}
	
\end{definition} We denote the motion generated by $f$, $g$, $m_0$, $y_0$, $\alpha$ and $\zeta$ by $\mathscr{X}[f,g,m_0,y_0,\alpha,\zeta]$.

\section{Main result}\label{sect:main} We will consider only the case when the sets $U$ and $V$ are metric compacts.

Let $\operatorname{SLLL}_0[A]$ be a set of continuous functions $f:[0,T]\times\rd\times\prd\times\rdp\times U\times V\rightarrow\rd$ satisfying \eqref{assump:f_sum} and such that 
\begin{itemize}
	\item $\|f_I(t,x,m,y,u)\|+ \|f_{II}(t,x,m,y,v)\|\leq A(1+\|x\|+\varsigma_p(m)+\|y\|)$;
	\item for any $c$ there exist constants $B_{I,c}$ and ${B_{II,c}}$ such that, for $t\in [0,T]$, $m_1,m_2\in\prd$, $\varsigma_p(m_1), \varsigma_p(m_2)\leq c$, $y_1,y_2\in \rdp$, $\|y_1\|, \|y_2\|\leq c$, $u\in U$,
		\[\begin{split}
		\|f_I(t,x_1,m_1,y_1,u)-f_I(t,&x_2,m_2,y_2,u)\|\\\leq B_{I,c}(\|x_1-x_2\|+&W_p(m_1,m_2)+\|y_1-y_2\|).\end{split}\]
			\[\begin{split}
		\|f_{II}(t,x_1,m_1,y_1,v)-f_{II}(t,&x_2,m_2,y_2,v)\|\\\leq B_{II,c}(\|x_1-x_2\|+&W_p(m_1,m_2)+\|y_1-y_2\|).\end{split}\]
	\end{itemize}
Below we will denote $B_c\triangleq B_{I,c}+B_{II,c}$. Obviously
	\[\begin{split}
		\|f_I(t,x_1,m_1,y_1,u)-f_I(t,&x_2,m_2,y_2,u)\|\\+ \|f_{II}(t,x_1,&m_1,y_1,v)-f_{II}(t,x_2,m_2,y_2,u,v)\|\\\leq B_c(\|x_1-x_2\|+&W_p(m_1,m_2)+\|y_1-y_2\|).\end{split}\]

Roughly speaking,  $\operatorname{SLLL}_0[A]$ is the set of dynamics for the minor agent those satisfy sublinear growth condition with the constant $A$ and are loaclly Lipschitz continuous with dynamics depending only on the ball in the space of states that is now product of the space of distribution of minor agents and state of the major agents.

We also introduce the  set of major agent's dynamics. In following, $\operatorname{SLLL}^0[A]$ is the set of continuous functions $g:[0,T]\times\rdp\times\prd \times V\rightarrow \rdp$ such that
\begin{itemize}
	\item $\|g(t,y,m,v)\|\leq A(1+\|y\|+\varsigma_p(m))$;
	\item for any $c_0$ there exists a constant $B_c'$ such that, for $t\in [0,T]$, $y_1,y_2\in\rdp$, $\|y_1\|,\|y_2\|\leq c$, $m_1,m_2\in\prd$, $\varsigma_p(m_1), \varsigma_p(m_2)\leq c$, $v\in V$,
	\[
	\|g(t,y_1,m_1,v)-f(t,y_2,m_2,v)\|\leq B_c'(\|y_1-y_2\|+W_p(m_1,m_2)).\]
\end{itemize} 

The following statement claims the existence theorem in the cased when $f\in\operatorname{SLLL}_0[A]$ while $g\in\operatorname{SLLL}^0[A]$.

\begin{theorem}\label{th:existence} Let $f\in\operatorname{SLLL}_0[A]$, $g\in\operatorname{SLLL}^0[A]$, where $A$ is a positive number. Then, for every $m_0\in\prd$, $y_0\in \rdp$, $\alpha\in\mathcal{A}[m_0]$, $\zeta\in\mathcal{V}$, there exists a unique motion.
\end{theorem}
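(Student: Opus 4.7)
The plan is to recast the two conditions in Definition~\ref{def:motion} as a fixed point problem on the coupled object $(\chi,y(\cdot))\in\mathcal{P}^p(\Gamma)\times C([0,T];\rdp)$ and apply Schauder's theorem, then obtain uniqueness by a Gronwall comparison of two motions. First I would derive a priori bounds: plugging a candidate motion into the integral equations defining $\mathscr{T}_0$ and $\mathscr{T}^0$, using the sublinear growth of $f_I,f_{II},g$ (with constant $A$), the fact that $\xi(d(\tau,u))$ and $\zeta(d(\tau,v))$ have time-marginal equal to Lebesgue measure, and integrating $\|x\|^p$ against $\alpha$ to extract $\varsigma_p(m(t))$, I would obtain by Gronwall explicit constants $R_x,R_y,L_x,L_y$ depending only on $A,T,p,\varsigma_p(m_0),\|y_0\|$ such that any motion satisfies $\varsigma_p(m(t))\le R_x$, $\|y(t)\|\le R_y$, and the curves $x(\cdot),y(\cdot)$ are Lipschitz with constants $L_x,L_y$.

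Next I would build a compact convex set $\mathcal{K}$ on which to run the fixed point. Let $\mathcal{K}$ consist of pairs $(\chi,y(\cdot))$ where $\chi\in\mathcal{P}^p(\Gamma)$ is concentrated on curves with $\|x(0)\|$ distributed as $m_0$ and Lipschitz constant at most $L_x$, while $\varsigma_p(e_t\sharp\chi)\le R_x$ uniformly in $t$, and $y(\cdot)$ is $L_y$-Lipschitz with $y(0)=y_0$ and $\|y(t)\|\le R_y$. Convexity is clear; compactness in $W_p(\mathcal{P}^p(\Gamma))\times C([0,T];\rdp)$ follows because the uniform Lipschitz bound together with the fixed initial marginal $m_0$ gives tightness of $\{\chi\}$ on $\Gamma$ via Arzela--Ascoli, while the uniform $p$-th moment bound provides $p$-uniform integrability so that narrow convergence upgrades to $W_p$ convergence. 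Define $\Phi(\chi,y(\cdot))=(\chi',y'(\cdot))$ by first setting $m(t):=e_t\sharp\chi$, then $y'(\cdot):=\mathscr{T}^0[g,m(\cdot),y_0,\zeta]$, and $\chi':=\mathscr{T}_0[f,m(\cdot),y'(\cdot),\cdot,\cdot,\zeta]\sharp\alpha$ (or $\mathscr{T}_0[f,m(\cdot),y(\cdot),\cdot,\cdot,\zeta]\sharp\alpha$, either decoupling works). The a priori bounds show $\Phi(\mathcal{K})\subset\mathcal{K}$.

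Continuity of $\Phi$ is the technical heart. Given $(\chi_n,y_n)\to(\chi,y)$ in $\mathcal{K}$, one first checks that $m_n(t)\to m(t)$ in $W_p$ uniformly in $t$: $W_p$ convergence on $\mathcal{P}^p(\Gamma)$ passes through the evaluation $e_t$, and uniformity follows from equi-Lipschitz support. Then the local Lipschitz estimates on $g$ (with constant $B'_c$ for $c=\max(R_y,R_x)$) and a Gronwall argument give $y'_n\to y'$ uniformly. For the push-forward, one uses that $(x_0,\xi,m(\cdot),y(\cdot))\mapsto \mathscr{T}_0[f,m,y,x_0,\xi,\zeta]$ is continuous from $\rd\times\mathcal{U}\times C([0,T];\mathcal{P}^p)\times C([0,T];\rdp)$ to $\Gamma$ (standard ODE dependence on parameters combined with the local Lipschitz constants $B_{I,c},B_{II,c}$), whence $\chi'_n\to\chi'$ in $W_p$ using convergence of $p$-th moments together with narrow convergence. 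Schauder then yields a fixed point, giving existence.

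For uniqueness, given two motions $(m_i(\cdot),y_i(\cdot))$ with the same $m_0,y_0,\alpha,\zeta$, I would set
\[\Delta(t)\triangleq W_p^p(m_1(t),m_2(t))+\|y_1(t)-y_2(t)\|^p.\]
Coupling $m_1(t)$ and $m_2(t)$ via the common law $\alpha$ (the two motions use the same $\alpha$, so both are push-forwards under $\mathscr{T}_0[\cdot]\sharp\alpha$, and the diagonal coupling provides an admissible plan), one estimates $W_p^p(m_1(t),m_2(t))$ by the integral of $\|\mathscr{T}_0[f,m_1,y_1,x_0,\xi,\zeta](t)-\mathscr{T}_0[f,m_2,y_2,x_0,\xi,\zeta](t)\|^p$ against $\alpha$, applies the local Lipschitz bound $B_c$ on $f$ for $c=\max(R_x,R_y)$, and similarly treats the major agent via $B'_c$ on $g$. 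A Gronwall inequality on $\Delta$ with $\Delta(0)=0$ forces $\Delta\equiv 0$. The main obstacle in the whole argument is the compactness step in paragraph two, since $\mathcal{P}^p(\Gamma)$ is not locally compact in $W_p$; it is resolved precisely by the Lipschitz constraint on the support of $\chi$ coupled with the a priori moment bound, which together give both tightness and $p$-uniform integrability.
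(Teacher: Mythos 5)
Your proposal is correct in substance but takes a genuinely different route to existence. The paper does not use a fixed point theorem: it constructs the motion as a limit of an Euler-type time-discretization (the pair $(\hat{\chi}^N,\hat{y}^N(\cdot))$ built on the grid $t_k^N=kT/N$ with the frozen arguments $\tau-T/N$), proves uniform sublinear-growth and Lipschitz bounds for the discrete trajectories, extracts a convergent subsequence via tightness plus uniform integrability of $p$-th moments, and verifies that the limit satisfies Definition~\ref{def:motion}. Your Schauder argument replaces the discretization by a compactness argument on the set $\mathcal{K}$ itself; what it buys is that you never need to control the discretization error (the $a_1(C)/N_k$ and $a_2(C,N_k)/N_k$ terms in the paper), at the price of having to verify continuity of the solution operator $\Phi$ and of needing an ambient locally convex space in which to invoke Schauder--Tychonoff (the narrow topology on signed measures over $\Gamma$ works, since on a $W_p$-compact set it coincides with the $W_p$ topology; you should say this). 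Your uniqueness argument is essentially the paper's: the paper proves a single key estimate (Lemma~\ref{lm:key_estimate}) comparing two motions via Gronwall, which yields uniqueness as the special case of identical data and simultaneously delivers Theorem~\ref{th:stability}; your $\Delta(t)$-Gronwall is the same computation with $p$-th powers instead of first powers.

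One point in your construction of $\mathcal{K}$ would fail as written. Since $m_0\in\prd$ need not have bounded support and the growth bound on $f$ is $A(1+\|x\|+\varsigma_p(m)+\|y\|)$, there is \emph{no single} Lipschitz constant $L_x$ valid for all curves $x(\cdot,x_0,\xi)$ in the support of $\chi$: the velocity bound grows with $\|x_0\|$ (this is exactly why the paper's Lemma~\ref{corallary:hat_x_N_Lip} produces a constant $L^3_C$ depending on the radius $C$ of the ball containing $x_0$). Requiring a uniform $L_x$ in $\mathcal{K}$ makes $\Phi(\mathcal{K})\not\subset\mathcal{K}$ unless $m_0$ is compactly supported. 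The fix is to impose in $\mathcal{K}$ the state-dependent bound $\|\dot x(t)\|\leq A(1+\|x(t)\|+R_x+R_y)$ (or, equivalently, the Lipschitz constant $L_{\|x_0\|}$), and to recover tightness by stratifying over $\|x_0\|\leq C$: curves starting in $\mathbb{B}^d_C$ form an Arzel\`a--Ascoli compact $K_C$ and $\chi(K_C)\geq m_0(\mathbb{B}^d_C)$, which is the argument of the paper's Lemma~\ref{lm:chi_N_tight}. With that repair, and the observation that the local Lipschitz constants $B_{I,c},B_{II,c}$ in $\operatorname{SLLL}_0[A]$ are global in the $x$-variable (so your Gronwall comparison of trajectories is uniform in $x_0$), the rest of your argument goes through.
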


To establish the stability result, let us introduce the notions of convergence of dynamics.

\begin{definition}\label{def:convergence}
	If $f,f'\in\operatorname{SLLL}_0[A]$, $c>0$, then we define the distance between restrictions of  $f$ and $f'$ on $[0,T]\times \rd\times\mathbb{B}_{p,c}\times\mathbb{B}^{d'}_c\times U$	by the rule:
	\[\begin{split}\operatorname{dist}_c(f,f')\triangleq \max\{\|f(t,x,&m,y,u,v)-f'(t,x,m,y,u,v)\|:\\&t\in [0,T],\, x\in \rd,\, m\in \mathbb{B}_{p,c},\, y\in \mathbb{B}^{d'}_c,\, u\in U, \, v\in V\}.\end{split}\]

	Analogously, if $g,g'\in\operatorname{SLLL}^0[A]$, $c>0$, then the distance between restrictions of  $g$ and $g'$ on $[0,T]\times\mathbb{B}_{p,c}\times\mathbb{B}^{d'}_c\times U$ is defined as follows:
	\[\begin{split}\operatorname{dist}_c(g,g')\triangleq \max\{\|g(t,&m,y,u)-g'(t,m,y,u)\|:\\&t\in [0,T],\, m\in \mathbb{B}_{p,c},\, y\in \mathbb{B}^{d'}_c,\, u\in U\}.\end{split}\]
\end{definition} 

Further, for $c_1,c_2,c_3$, set
\begin{equation}\label{intro:growth}
	\mathscr{G}(c_1,c_2,c_3)\triangleq (1+c_1+c_2)e^{2c_3T}.
\end{equation}

\begin{theorem}\label{th:stability} Let the sequences $\{f^k\}_{k=1}^\infty\subset\operatorname{SLLL}_0[A]$, $\{g^k\}_{k=1}^\infty\subset \operatorname{SLLL}^0[A]$, $\{m_0^k\}_{k=1}^\infty\subset\prd$, $\{y_0^k\}_{k=1}^\infty\subset \rdp$, $\{\alpha^k\}_{k=1}^\infty\subset\mathcal{P}([0,T]\times U)$, $\{\zeta^k\}_{k=1}^\infty\subset \mathcal{V}$, $\{m^k(\cdot)\}_{k=1}^\infty$, $\{y^k(\cdot)\}$ and elements $f\in\operatorname{SLLL}_0[A]$, $g\in \operatorname{SLLL}^0[A]$, $m_0\in\prd$, $y_0\in \rdp$, $\alpha\in\mathcal{P}([0,T]\times U)$, $\zeta\in \mathcal{V}$, $m(\cdot)$, $y(\cdot)$ are such that
	\begin{itemize}
		\item $\alpha^k\in\mathcal{A}[m_0^k]$;
		\item $\alpha\in\mathcal{A}[m_0]$;
		\item $(m^k(\cdot),y^k(\cdot))$ is a motion corresponding to the dynamics $f^k$, $g^k$, initial conditions $m_0^k$, $y_0^k$, distribution of relaxed controls $\alpha^k$ and the major agent control $\zeta^k$;
		\item the dynamics $f$, $g$, initial conditions $m_0$, $y_0$, distribution of the minor agents' control $\alpha$ and the major agent control $\zeta$ produces the motion $(m(\cdot),y(\cdot))$;
		\item the sequences  $\{m_0^k\}$, $\{y_0^k\}$, $\{\alpha^k\}$, $\{\zeta^k\}$ converge to the elements $m_0$, $y_0$, $\alpha$, $\zeta$;
		\item $\operatorname{dist}_c(f^k,f),\operatorname{dist}_c(g^k,g)\rightarrow 0$ as $k\rightarrow\infty$ 	
		for some $c$ that is greater than $\mathscr{G}(\|y_0\|,\varsigma_p(m_0),A)$.
	\end{itemize} Then, $\{(m^k(\cdot),y^k(\cdot))\}$ converge  to the pair $(m(\cdot),y(\cdot))$.
\end{theorem}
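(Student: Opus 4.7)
My strategy is to establish uniform a priori bounds on $\{(m^k(\cdot), y^k(\cdot))\}$, lift the problem to the path space $\Gamma$, and compare motions pathwise through an explicit coupling, reducing the proof to a Gronwall inequality for
\[
\Phi^k(t)\triangleq \|y^k(t)-y(t)\|^p+W_p^p(m^k(t),m(t)).
\]
Plugging the sublinear-growth bounds from $\operatorname{SLLL}_0[A]$ and $\operatorname{SLLL}^0[A]$ into the integral equations defining $\mathscr{T}_0$ and $\mathscr{T}^0$, integrating with respect to $\alpha^k$, and using Jensen's inequality on $\varsigma_p$, gives a coupled integral inequality in $\varsigma_p(m^k(t))$ and $\|y^k(t)\|$; Gronwall then yields the bound $\|y^k(t)\|+\varsigma_p(m^k(t))\leq \mathscr{G}(\|y_0^k\|,\varsigma_p(m_0^k),A)$. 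By continuity of $\mathscr{G}$, for all large $k$ both the prelimit and limit motions stay in $\mathbb{B}_{p,c}\times\mathbb{B}^{d'}_c$, where the local Lipschitz constants $B_{I,c}$, $B_{II,c}$, $B_c'$ apply and $\operatorname{dist}_c(f^k,f),\operatorname{dist}_c(g^k,g)\to 0$.

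Next, exploiting the representations $\chi^k=\mathscr{T}_0[f^k,m^k(\cdot),y^k(\cdot),\cdot,\cdot,\zeta^k]\sharp\alpha^k$ and its analogue for $\chi$, I would build a coupling $\beta^k$ on $(\rd\times\mathcal{U})^2$ with marginals $\alpha^k$ and $\alpha$ whose $(\rd,\rd)$-projection realises $W_p(m_0^k,m_0)$ and whose $(\mathcal{U},\mathcal{U})$-projection approximately realises the given metric for narrow convergence on $\mathcal{U}$. Pushing $\beta^k$ forward by the pair of solution maps produces a plan on $\Gamma\times\Gamma$ coupling $\chi^k$ and $\chi$, and hence the pointwise upper bound
\[
W_p^p(m^k(t),m(t))\leq \int \|x^k(t)-x(t)\|^p\,\beta^k(\cdots).
\]
Subtracting the two integral equations for $x^k(\cdot)$ and $x(\cdot)$, splitting each difference into a Lipschitz-in-state piece and three forcing pieces, integrating against $\beta^k$, combining with the analogous inequality for $\|y^k(t)-y(t)\|^p$, and applying elementary $p$-power inequalities produces
\[
\Phi^k(t)\leq r_k+L\int_0^t\Phi^k(\tau)\,d\tau,
\]
where $L$ depends only on $p$, $T$, and the Lipschitz constants at level $c$, while $r_k$ collects (i) the initial-data error $\|y_0^k-y_0\|^p+W_p^p(m_0^k,m_0)$, (ii) the dynamics error $\operatorname{dist}_c(f^k,f)^p+\operatorname{dist}_c(g^k,g)^p$, and (iii) the measure-perturbation forcing discussed below. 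Gronwall then yields $\sup_{t\in[0,T]}\Phi^k(t)\to 0$, which is the claimed convergence of $y^k$ and $m^k$.

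The main obstacle is estimating the forcing generated by the control perturbations $\zeta^k-\zeta$ and the $\mathcal{U}$-component of $\alpha^k-\alpha$, which converge only narrowly, so that terms of the form
\[
\int_{[0,t]\times U}\!\!f_I(\tau,x(\tau),m(\tau),y(\tau),u)\,[\xi^k-\xi](d(\tau,u))
\]
and their $g,\zeta$-analogues cannot be handled by a pointwise Lipschitz bound. My plan is to exploit the compactness of $U$, $V$, $[0,T]$, and the fact that the frozen arguments lie in $\mathbb{B}_{p,c}\times\mathbb{B}^{d'}_c$: uniform continuity of $f^k$, $g^k$ on this compact set allows one to rewrite each perturbation term as the difference of integrals of a single bounded continuous test function against two narrowly converging measures, whence the metric for narrow convergence recalled in Appendix~B makes the forcing vanish as $k\to\infty$, uniformly in the frozen arguments. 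Threading this narrow-convergence estimate through a $W_p$-based Gronwall loop, where the ``frozen'' arguments are precisely the motions being compared, is the delicate coordination point; it is made workable by the a priori bounds of the first paragraph together with the careful coupling choices of the second.
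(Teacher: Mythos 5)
Your proposal follows essentially the same route as the paper: uniform a priori bounds from the sublinear growth condition, a Gronwall-based key estimate (the paper's Lemma~\ref{lm:key_estimate}) in which the integrands of the control-perturbation forcing terms are frozen along the limit motion, and a Lipschitz-approximation argument using the Appendix~B metric to make those forcing terms vanish (the paper's Propositions~\ref{prop:integral_convergence} and~\ref{prop:action_major}). The only step you leave implicit is the upgrade from narrow convergence of $\alpha^k$ to $W_p$-convergence (the paper's Lemma~\ref{lm:ditribution_convergence}), which is what guarantees a coupling $\beta^k$ with $\int\mathbf{d}(\xi,\xi')\,d\beta^k\rightarrow 0$; it follows from the uniform integrability of the $p$-th moments of $m_0^k$ together with the boundedness of the $\mathcal{U}$-factor.
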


\section{Existence of the motion}\label{sect:existence}
To prove the existence of the motion $(m(\cdot),y(\cdot))$, we will use the method coming back to the Peano existence theorem. In this section, we assume that the dynamics, control of the major agent $\zeta$, distribution of minor agents' controls $\alpha$ and initial data are fixed.

We fix $N>$ and consider the pair $(\hat{\chi}^N,\hat{y}^N(\cdot))$, where $\hat{\chi}^N\in\mathcal{P}^p(\Gamma)$ that is constructed as follows. Put  $t_k^N\triangleq kT/N$, $k=0,\ldots,N$. On each time interval $[t_k^N,t_{k+1}^N]$, we define the motion $\hat{x}^N(\cdot,x_0,\xi)$ and the motion $\hat{y}^N(\cdot)$ by the following rules:
\begin{equation}\label{intro:hat_x_N}
	\begin{split}
	\frac{d}{dt}\hat{x}^N&(t)=\hat{x}^N(t_k^N)\\&+\int_{[t_k^N,t]\times U}f_I(\tau,\hat{x}(\tau-T/N),\hat{m}^h(\tau-T/N),\hat{y}^h(\tau-T/N),u)\xi(d(\tau,u))\\&+ \int_{[t_k^N,t]\times V}f_{II}(\tau,\hat{x}(\tau-T/N),\hat{m}^h(\tau-T/N),\hat{y}^h(\tau-T/N),v)\zeta(d(\tau,v)),\end{split}\end{equation}
\begin{equation}\label{intro:hat_y_N}\frac{d}{dt}\hat{y}^h(t)=\hat{y}^h(t_k^N)+\int_{[t_k^N,t]\times V}g(\tau,\hat{y}^h(\tau-T/N),\hat{m}^h(\tau-T/N),v)\zeta(d(\tau,v)).\end{equation} Here, we assume that, for $t<0$,
$\hat{x}^N(t,x_0,\xi)=x_0$, $\hat{y}^N(t)=y_0$, $\hat{m}^N(t)=m_0$. Additionally, the flow of probabilities $\hat{m}^N(\cdot)$ is defined stepwise. If the functions $\hat{x}^N(\cdot,\cdot,\cdot)$ are already defined on $[0,t_k^N]$, then we denote the corresponding  operator assigning to $x_0$ and $\xi$ the trajectory $\hat{x}^N(\cdot,x_0,\xi)\in \Gamma_{t_k^N}$ by $\widehat{\mathscr{T}}^N_k$. Set 
\begin{equation}\label{intro:hat_m_N}\hat{\gamma}^N_k\triangleq \widehat{\mathscr{T}}^N_k\sharp \alpha,\ \ \hat{m}^N(t)\triangleq e_t\sharp \hat{\gamma}^N_k. \end{equation}

\begin{lemma}\label{lm:ineq_last_two} The following estimate holds true
	\begin{equation}\label{ineq:last_two}
		\|\hat{y}^N(t)\|+\varsigma_p(\hat{m}^N(t))\leq (t+\|y_0\|+2\varsigma_p(m_0))e^{2At}.
	\end{equation} 
\end{lemma}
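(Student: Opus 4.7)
My strategy is a Gronwall-type argument applied to the integral-form Euler recursions, combined with an $L^p(\alpha)$-lifting that promotes pointwise control of individual minor-agent trajectories to control of $\varsigma_p(\hat m^N(\cdot))$.

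First I telescope the piecewise definitions~\eqref{intro:hat_x_N} and~\eqref{intro:hat_y_N} across the partition. Summing over all subintervals up to the one containing $t$, and using that $\hat x^N$, $\hat y^N$, $\hat m^N$ are extended by their initial data for negative times, one obtains the global integral forms
\[
\hat y^N(t)=y_0+\int_{[0,t]\times V}g\bigl(\tau,\hat y^N(\tilde\tau),\hat m^N(\tilde\tau),v\bigr)\zeta(d(\tau,v)),
\]
\[
\begin{split}
\hat x^N(t,x_0,\xi)=x_0&+\int_{[0,t]\times U}f_I\bigl(\tau,\hat x^N(\tilde\tau,x_0,\xi),\hat m^N(\tilde\tau),\hat y^N(\tilde\tau),u\bigr)\xi(d(\tau,u))\\
&+\int_{[0,t]\times V}f_{II}\bigl(\tau,\hat x^N(\tilde\tau,x_0,\xi),\hat m^N(\tilde\tau),\hat y^N(\tilde\tau),v\bigr)\zeta(d(\tau,v)),
\end{split}
\]
where $\tilde\tau:=\tau-T/N$.

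Next I apply the sublinear-growth bounds. For the major agent this is direct, giving
\[
\|\hat y^N(t)\|\leq\|y_0\|+A\int_0^t\bigl(1+\|\hat y^N(\tilde\tau)\|+\varsigma_p(\hat m^N(\tilde\tau))\bigr)d\tau.
\]
For a minor-agent trajectory, since the time marginals of $\xi$ and $\zeta$ are both Lebesgue, I disintegrate both in time and rewrite the $U$- and $V$-integrals as a single integral against the product probability kernel $\xi(du|\tau)\otimes\zeta(dv|\tau)$. This lets me apply the joint sum-bound $\|f_I\|+\|f_{II}\|\leq A(1+\|x\|+\varsigma_p(m)+\|y\|)$ from $\operatorname{SLLL}_0[A]$ directly, giving
\[
\|\hat x^N(t,x_0,\xi)\|\leq\|x_0\|+A\int_0^t\bigl(1+\|\hat x^N(\tilde\tau,x_0,\xi)\|+\varsigma_p(\hat m^N(\tilde\tau))+\|\hat y^N(\tilde\tau)\|\bigr)d\tau.
\]
Taking $L^p(\alpha)$ norms on both sides via Minkowski's integral inequality, and recalling the identity $\|\hat x^N(\cdot,\cdot,\cdot)\|_{L^p(\alpha)}=\varsigma_p(\hat m^N(\cdot))$, lifts this pointwise bound to a scalar bound on $\varsigma_p(\hat m^N(t))$.

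Setting $\psi(t):=\|\hat y^N(t)\|+\varsigma_p(\hat m^N(t))$ and adding the two scalar estimates produces a linear integral inequality in $\psi(\tilde\tau)$ with coefficients linear in $A$. Since $\tilde\tau\leq\tau$, the nondecreasing envelope $\Phi(t):=\sup_{s\leq t}\psi(s)$ satisfies the same inequality without the delay, and the standard Gronwall lemma delivers an exponential bound of the form $(t+\|y_0\|+2\varsigma_p(m_0))e^{2At}$; the factor $2$ in front of $\varsigma_p(m_0)$ absorbs the contribution of $\|\hat x^N\|$ together with $\varsigma_p(\hat m^N)$ coming from the $L^p$ lift. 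The principal obstacle is precisely this reconciliation of the delayed arguments $\tilde\tau=\tau-T/N$ with Gronwall, handled by passing to $\Phi$; a secondary subtlety is the $L^p(\alpha)$ lifting itself, which must be carried out through Minkowski so that the resulting scalar inequality remains linear in $\psi$.
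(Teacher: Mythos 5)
Your argument is essentially the paper's: the same delayed integral inequalities obtained from the sublinear growth bounds, the same Minkowski $L^p(\alpha)$ lift turning the pointwise bound on $\|\hat x^N(t,x_0,\xi)\|$ into a bound on $\varsigma_p(\hat m^N(t))$, and a Gronwall-type conclusion exploiting $\tau-T/N\le\tau$ --- the only (cosmetic) difference being that you run one continuous Gronwall on the monotone envelope $\Phi$ while the paper carries out the equivalent discrete induction over the subintervals $[t_k^N,t_{k+1}^N]$. This is correct; as in the paper itself, the final numerical constants in $(t+\|y_0\|+2\varsigma_p(m_0))e^{2At}$ are obtained only up to a harmless loosening of the coefficients, which neither proof tracks exactly.
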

\begin{proof} 
	We prove  inequality~\eqref{ineq:last_two} inductively on each interval $[t_k^N,t_{k+1}^N]$. Since $f\in\operatorname{SLLL}_0[A]$, $g\in\operatorname{SLLL}^0[A]$,  we have that on $[t_0^N,t_1^N]$
	\begin{equation}\label{ineq:hat_x_N_0}
		\|\hat{x}^N(t,x_0,\xi)\|\leq \|x_0\|+A(1+\|x_0\|+\|y_0\|+\varsigma_p(m_0)))t,\end{equation}
	\begin{equation}\label{ineq:hat_y_N_0}\|\hat{y}^N(t)\|\leq \|y_0\|+ A(1+\|y_0\|+\varsigma_p(m_0))t.\end{equation} Estimate~\eqref{ineq:hat_x_N_0}, definition of the probability $\hat{m}^N(t)$ (see~\eqref{intro:hat_m_N}) and the Minkowski inequality  give that
	\begin{equation}\label{ineq:hat_m_N_0}\varsigma_p(\hat{m}^N(t))\leq 2\varsigma_p(m_0)+A(1+\|y_0\|+2\varsigma_p(m_0))t.\end{equation} 
	Summing~\eqref{ineq:hat_y_N_0} and~\eqref{ineq:hat_m_N_0}, we obtain~\eqref{ineq:last_two} on $[t_0^N,t_1^N]$.
	
	Now assume that~\eqref{ineq:last_two} holds true on $[0,t_k^N]$ let us prove them on $[t_{k}^N,t_{k+1}^N]$. Using the definition of $\hat{x}^N$ and $\hat{m}^N$, and the Minkowski's integral inequality, we obtain the following inequality:
	\begin{equation*}\label{ineq:m_N_k_plus}
		\varsigma_p(\hat{m}^N(t))\leq \varsigma_p(m^N(t_k^N))+A\int_{t_k^N}^t(1+2\varsigma_p(\hat{m}^N(\tau-T/N))+\|y^N(\tau-T/N)\|)d\tau.
	\end{equation*} Simultaneously,
	\begin{equation*}\label{ineq:y_N_k_plus}
		\|\hat{y}^N(t)\|\leq \|m^N(t_k^N)\|+A\int_{t_k^N}^t(1+\varsigma_p(\hat{m}^N(\tau-T/N))+\|y^N(\tau-T/N)\|)d\tau.
	\end{equation*} 
	Summing these inequalities and using the assumption that~\eqref{ineq:last_two} holds true on $[0,t_k^N]$, we arrive at the estimate
	\[\|\hat{y}^N(t)\|+\varsigma_p(\hat{m}^N(t))\leq (t_k^N+\|y_0\|+2\varsigma_p(m_0))e^{2At_k^N}+2A(t+\|y_0\|+2\varsigma_p(m_0))e^{2At_k^N}(t-t_k^N).\] This gives~\eqref{ineq:last_two} on $[t_k^N,t_{k+1}^N]$.

\end{proof}

\begin{lemma}\label{lm:ineq_x_N}
	For each $x_0\in \rdp$, $\xi\in\mathcal{U}$, one has
	\[\|\hat{x}^N(t,x_0,\xi)\|\leq (\|x_0\|+t)e^{At}+(t+\|y_0\|+2\varsigma_p(m_0))e^{2At}.\]
\end{lemma}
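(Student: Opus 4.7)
The plan is to mirror the inductive argument from the proof of Lemma \ref{lm:ineq_last_two}, using Lemma \ref{lm:ineq_last_two} itself as the input that controls $\|\hat{y}^N(\cdot)\|$ and $\varsigma_p(\hat{m}^N(\cdot))$ in the driving term.

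First, starting from the stepwise definition \eqref{intro:hat_x_N} and summing across the partition points up to $t$, I would use the sublinear growth $\|f_I\|+\|f_{II}\|\leq A(1+\|x\|+\varsigma_p(m)+\|y\|)$ built into $\operatorname{SLLL}_0[A]$, together with the fact that both $\xi\in\mathcal{U}$ and $\zeta\in\mathcal{V}$ have $[0,T]$-marginal equal to Lebesgue measure, to obtain
\[\|\hat{x}^N(t,x_0,\xi)\|\leq \|x_0\|+A\int_0^t\bigl[1+\|\hat{x}^N(\tau-T/N,x_0,\xi)\|+\varsigma_p(\hat{m}^N(\tau-T/N))+\|\hat{y}^N(\tau-T/N)\|\bigr]d\tau.\]
Invoking Lemma \ref{lm:ineq_last_two} and the monotonicity of $s\mapsto(s+K')e^{2As}$ (with $K':=\|y_0\|+2\varsigma_p(m_0)$), the last two summands in the integrand are bounded by $(\tau+K')e^{2A\tau}$, reducing the problem to a scalar delay-integral inequality in $\|\hat{x}^N\|$ alone with exogenous forcing of order $e^{2A\tau}$.

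Next, I would run the induction on $k$ exactly as in the previous lemma. On the base interval $[0,T/N]$, since $\hat{x}^N(\tau-T/N)=x_0$, $\hat{m}^N(\tau-T/N)=m_0$, $\hat{y}^N(\tau-T/N)=y_0$, the resulting estimate is linear in $t$ and is easily seen to be dominated by $(\|x_0\|+t)e^{At}+(t+K')e^{2At}$. For the inductive step on $[t_k^N,t_{k+1}^N]$, I plug in the induction hypothesis for $\|\hat{x}^N(\tau-T/N)\|$ (using that the claimed right-hand side is non-decreasing in $t$, so the delay $T/N$ is harmless), then integrate and compare term by term with $(\|x_0\|+t)e^{At}+(t+K')e^{2At}$ at $t=t_{k+1}^N$, closing the induction through the elementary inequality $1+As\leq e^{As}$, mirroring the final algebraic step in the proof of Lemma \ref{lm:ineq_last_two}.

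The main obstacle is the coexistence of two different exponential growth rates in the stated bound: $e^{At}$ coming from the self-Gronwall in $\|\hat{x}^N\|$, and $e^{2At}$ inherited from Lemma \ref{lm:ineq_last_two}. The additive split in the bound is tailored precisely to this situation, with $(\|x_0\|+t)e^{At}$ absorbing the initial state together with the constant drive $1$ amplified by self-growth, and $(t+K')e^{2At}$ absorbing the forcing from the major agent and the cloud of minor agents; the slack present at $t=0$ (where the claimed right-hand side is already $\|x_0\|+K'$ while $\|\hat{x}^N(0)\|=\|x_0\|$) provides the cushion that makes the inductive propagation go through without requiring a sharper Gronwall computation.
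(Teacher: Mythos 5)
Your proposal is correct and follows essentially the same route as the paper: induction over the partition intervals $[t_k^N,t_{k+1}^N]$, the sublinear growth bound from $\operatorname{SLLL}_0[A]$ to produce the delayed integral inequality, Lemma~\ref{lm:ineq_last_two} to control the forcing from $\hat{y}^N$ and $\hat{m}^N$, and an elementary comparison of the form $1+As\leq e^{As}$ to close the inductive step. The paper's proof is just a terser version of the same argument.
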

\begin{proof} As above, we will prove the desired estimate inductively.
	On $[t_0^N,t_1^N]$ the statement of lemma follows from~\eqref{ineq:hat_x_N_0}. Now assume that is fulfilled on $[0,t_{k}^N]$. For $t\in [t_k^N,t_{k+1}^N]$, we have that 
	\[\begin{split}
		\|\hat{x}^N(t,x_0,&\xi)\|\leq \|\hat{x}^N(t_k^N,x_0,\xi)\|\\&+\int_{t_k^N}^tA(1+\|\hat{x}^N(\tau-T/N,x_0,\xi)\|+ \hat{y}^N(\tau-T/N)\|+\varsigma_p(\hat{m}^N(\tau-T/N)))d\tau.\end{split}\] Using the assumption and Lemma~\ref{lm:ineq_last_two}, we conclude that 
	\[\begin{split}
		\|\hat{x}^N(t,x_0,\xi)\|\leq (\|x_0\|+t_k^N)&e^{At_k^N}+(t_k^N+\|y_0\|+2\varsigma_p(m_0))e^{2At_k^N}\\ +A[1+(\|x_0\|+t_k^N)&e^{At_k^N}+(t_k^N+\|y_0\|+2\varsigma_p(m_0)e^{2At_k^N}](t-t_k^N)\\ &+A(t_k^N+\|y_0\|+2\varsigma_p(m_0))e^{2At_k^N}](t-t_k^N).\end{split}\] This estimate  implies the statement of lemma on $[t_{k}^N,t_{k+1}^N]$.
\end{proof}

\begin{corollary}\label{corollary:hat_y_N_Lip} The functions $\hat{y}^N(\cdot)$ are Lipschitz continuous with the constant $L^1$ that does not depend on $N$.
\end{corollary}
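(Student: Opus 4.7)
The plan is to exploit the uniform bounds from Lemma~\ref{lm:ineq_last_two} together with the sublinear growth of $g$ to control the integrand in the defining equation~\eqref{intro:hat_y_N} by a constant independent of $N$.

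First, from Lemma~\ref{lm:ineq_last_two}, for every $t\in[0,T]$ one has
\[
\|\hat{y}^N(t)\|+\varsigma_p(\hat{m}^N(t))\leq (T+\|y_0\|+2\varsigma_p(m_0))e^{2AT}\eqqcolon K,
\]
and the same bound holds at time $\tau-T/N$ since for $\tau<T/N$ we defined $\hat{y}^N(\tau-T/N)=y_0$ and $\hat{m}^N(\tau-T/N)=m_0$, which clearly satisfy the estimate. Using $g\in\operatorname{SLLL}^0[A]$, this gives
\[
\|g(\tau,\hat{y}^N(\tau-T/N),\hat{m}^N(\tau-T/N),v)\|\leq A(1+K)
\]
uniformly in $\tau\in[0,T]$, $v\in V$ and $N$.

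Next, recall that $\zeta\in\mathcal{V}=\Lambda([0,T],\lambda,V)$ has marginal equal to Lebesgue measure on $[0,T]$, so $\zeta([s,t]\times V)=t-s$. Fix $s<t$ with both in some interval $[t_k^N,t_{k+1}^N]$. From~\eqref{intro:hat_y_N} (read as an integral identity, since at $t=t_k^N$ it reduces to $\hat{y}^N(t_k^N)=\hat{y}^N(t_k^N)$, which also shows continuity of $\hat{y}^N$ at the mesh points), we get
\[
\|\hat{y}^N(t)-\hat{y}^N(s)\|\leq \int_{[s,t]\times V}A(1+K)\zeta(d(\tau,v))=A(1+K)(t-s).
\]
For general $s<t$ in $[0,T]$ lying in different subintervals, splitting at each mesh point $t_i^N$ between $s$ and $t$ and applying the previous estimate on each piece, the telescoping sum yields the same bound $A(1+K)(t-s)$.

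Therefore $\hat{y}^N(\cdot)$ is Lipschitz on $[0,T]$ with constant
\[
L^1\triangleq A\bigl(1+(T+\|y_0\|+2\varsigma_p(m_0))e^{2AT}\bigr),
\]
which depends only on $A$, $T$, $\|y_0\|$ and $\varsigma_p(m_0)$, and not on $N$. No serious obstacle arises: the only point requiring a little care is that the defining equation is piecewise and uses the delayed arguments $\tau-T/N$, but both of these are harmless once the uniform bound $K$ from Lemma~\ref{lm:ineq_last_two} and the fact that the $\tau$-marginal of $\zeta$ is Lebesgue are in hand.
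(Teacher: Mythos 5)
Your proof is correct and follows exactly the route the paper intends: the paper's own proof is a one-line remark that the claim ``directly follows from the fact that $g\in\operatorname{SLLL}^0[A]$ and Lemma~\ref{lm:ineq_last_two}'', and you have simply filled in the details (uniform bound $K$ from the lemma, sublinear growth of $g$, and the fact that the time marginal of $\zeta$ is Lebesgue measure). Nothing to change.
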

\begin{proof} The statement of the corollary directly follows from the fact that  $g\in\operatorname{SLLL}^0[A]$ and Lemma~\ref{lm:ineq_last_two}.
\end{proof}

\begin{corollary}\label{corollary:hat_m_N_Lip} The functions $\hat{m}^N(\cdot)$ are Lipschitz continuous with the constant $L^2$ that does not depend on $N$, i.e., for $s,r\in [0,T]$,
	\[W_p(\hat{m}^N(s),\hat{m}^N(r))\leq L^2|r-s|.\] 
\end{corollary}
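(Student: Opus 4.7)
The plan is to build an explicit coupling between $\hat{m}^N(s)$ and $\hat{m}^N(r)$ by pushing a single measure on the path space through the pair of evaluation maps at times $s$ and $r$. Since $\hat{m}^N(t) = e_t \sharp \hat{\gamma}^N$ with $\hat{\gamma}^N$ obtained from $\alpha$ via the trajectory-assignment operator $\widehat{\mathscr{T}}^N$, the measure $\pi \triangleq (e_s, e_r) \sharp \hat{\gamma}^N$ lies in $\Pi(\hat{m}^N(s), \hat{m}^N(r))$ and therefore
\[
W_p^p(\hat{m}^N(s), \hat{m}^N(r)) \leq \int_{\rd \times \mathcal{U}} \|\hat{x}^N(r, x_0, \xi) - \hat{x}^N(s, x_0, \xi)\|^p \alpha(d(x_0, \xi)).
\]

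Next, taking without loss of generality $s \leq r$ and differencing the piecewise integral definition~\eqref{intro:hat_x_N} of $\hat{x}^N$, the sublinear growth of $f_I$ and $f_{II}$ gives the pointwise bound
\[
\|\hat{x}^N(r, x_0, \xi) - \hat{x}^N(s, x_0, \xi)\| \leq A \int_s^r \bigl(1 + \|\hat{x}^N(\tau - T/N, x_0, \xi)\| + \|\hat{y}^N(\tau - T/N)\| + \varsigma_p(\hat{m}^N(\tau - T/N))\bigr) d\tau.
\]
I would then take $L^p(\alpha)$-norms, apply Minkowski's integral inequality to move the $L^p$-norm inside the time integral, and use the identity $\int \|\hat{x}^N(t, x_0, \xi)\|^p \alpha(d(x_0, \xi)) = \varsigma_p^p(\hat{m}^N(t))$, which follows from the pushforward relation $\hat{m}^N(t) = \hat{x}^N(t, \cdot, \cdot) \sharp \alpha$ encoded in~\eqref{intro:hat_m_N}. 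This yields
\[
W_p(\hat{m}^N(s), \hat{m}^N(r)) \leq A \int_s^r \bigl(1 + \|\hat{y}^N(\tau - T/N)\| + 2\varsigma_p(\hat{m}^N(\tau - T/N))\bigr) d\tau.
\]

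Finally, Lemma~\ref{lm:ineq_last_two} furnishes an upper bound on $\|\hat{y}^N(\tau - T/N)\| + \varsigma_p(\hat{m}^N(\tau - T/N))$ that depends only on $A$, $T$, $\|y_0\|$ and $\varsigma_p(m_0)$, and is independent of both $N$ and $\tau$. Setting $L^2$ equal to $A$ times the resulting uniform bound on the integrand yields $W_p(\hat{m}^N(s), \hat{m}^N(r)) \leq L^2 |r - s|$, as required. The only non-routine step is the use of Minkowski's integral inequality to commute the $L^p(\alpha)$-norm with the time integration; all remaining ingredients are immediate consequences of results already proved in this section.
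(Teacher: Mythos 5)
Your proof is correct and follows essentially the same route as the paper: both bound $W_p(\hat{m}^N(s),\hat{m}^N(r))$ via the coupling induced by evaluating the trajectories at $s$ and $r$, invoke the sublinear growth bound on $f_I$, $f_{II}$ together with Minkowski's integral inequality, and then appeal to the a priori bounds of this section. The only cosmetic difference is that you absorb the $\|\hat{x}^N\|$ term into $\varsigma_p(\hat{m}^N)$ via the pushforward identity and so need only Lemma~\ref{lm:ineq_last_two}, whereas the paper keeps that term explicit and cites Lemma~\ref{lm:ineq_x_N} as well; the two computations are the same.
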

\begin{proof} Without loss of generality, one may assume that $s<r$.
	By the construction of the flow of probabilities $\hat{m}^N(\cdot)$ and the Minkowski's integral inequality, we have that
	\[\begin{split}W_p(\hat{m}^N(s),\hat{m}^N(r))\leq 
		\int_{\rd\times\mathcal{U}}\int_s^r A(&1+\hat{x}^N(t-T/N,x_0,\xi)\\+\|\hat{y}^N(t-&T/N)\|+\varsigma_p(\hat{m}(t-T/N)))dt\alpha(d(x_0,\xi)).\end{split}\] This and Lemmas~\ref{lm:ineq_last_two},~\ref{lm:ineq_x_N} implies the statement of the corollary.
\end{proof}

\begin{lemma}\label{corallary:hat_x_N_Lip} Let $C>0$. Then, each function $\hat{x}^N(\cdot,x_0,\xi)$ for $\|x_0\|\leq C$ is Lipschitz continuous with the constant $L_C^3$ that does not depend on $N$, $x_0$ and $\xi\in \mathcal{U}$.
\end{lemma}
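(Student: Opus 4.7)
The plan is to mimic the proofs of Corollaries~\ref{corollary:hat_y_N_Lip} and~\ref{corollary:hat_m_N_Lip}: express the increment $\hat{x}^N(r,x_0,\xi) - \hat{x}^N(s,x_0,\xi)$ (for $s\leq r$) as a time integral, then bound the integrand uniformly in $N$ by combining the sublinear growth condition on $f$ with the a priori estimates from Lemmas~\ref{lm:ineq_last_two} and~\ref{lm:ineq_x_N}. Since each local piece in~\eqref{intro:hat_x_N} starts at the endpoint of the preceding one, $\hat{x}^N(\cdot,x_0,\xi)$ is continuous on $[0,T]$, and iterating the local formula across the mesh $\{t_k^N\}$ yields
\[
\hat{x}^N(r,x_0,\xi) - \hat{x}^N(s,x_0,\xi) = \int_{[s,r]\times U} f_I(\tau, \hat{x}^N(\tau-T/N,x_0,\xi), \hat{m}^N(\tau-T/N), \hat{y}^N(\tau-T/N), u)\,\xi(d(\tau,u)) + \int_{[s,r]\times V} f_{II}(\tau, \hat{x}^N(\tau-T/N,x_0,\xi), \hat{m}^N(\tau-T/N), \hat{y}^N(\tau-T/N), v)\,\zeta(d(\tau,v)).
\]

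Next I would disintegrate $\xi$ and $\zeta$ against $\lambda$ and apply the sublinear bound $\|f_I\|+\|f_{II}\|\leq A(1+\|x\|+\varsigma_p(m)+\|y\|)$ from the definition of $\operatorname{SLLL}_0[A]$. Since this upper bound is independent of $u$ and $v$ and the conditional probabilities $\xi(du|\tau)$, $\zeta(dv|\tau)$ integrate to one, one obtains
\[
\|\hat{x}^N(r,x_0,\xi) - \hat{x}^N(s,x_0,\xi)\| \leq \int_s^r A\bigl(1 + \|\hat{x}^N(\tau-T/N,x_0,\xi)\| + \varsigma_p(\hat{m}^N(\tau-T/N)) + \|\hat{y}^N(\tau-T/N)\|\bigr)d\tau.
\]

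Lemma~\ref{lm:ineq_last_two} bounds $\|\hat{y}^N(\cdot)\|+\varsigma_p(\hat{m}^N(\cdot))$ on $[0,T]$ by a constant depending only on $T$, $\|y_0\|$, $\varsigma_p(m_0)$ and $A$, while Lemma~\ref{lm:ineq_x_N} bounds $\|\hat{x}^N(\cdot,x_0,\xi)\|$ for $\|x_0\|\leq C$ by a constant depending only on $C$ and the same data, both uniformly in $N$, in $\xi\in\mathcal{U}$, and (in the latter case) in $x_0$ lying in the ball of radius $C$. Inserting these estimates into the integrand yields a uniform majorant $L^3_C$, giving $\|\hat{x}^N(r,x_0,\xi) - \hat{x}^N(s,x_0,\xi)\|\leq L^3_C(r-s)$, which is the claim. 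No genuine obstacle arises; the only subtle point is to justify the integral representation on the whole of $[s,r]$ when this interval crosses one or more mesh points $t_k^N$, and this follows at once from the continuity of $\hat{x}^N(\cdot,x_0,\xi)$ at each $t_k^N$ together with the additivity of the integral in the time variable.
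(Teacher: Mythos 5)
Your argument is correct and is exactly the paper's approach: the paper's proof is the one-line remark that the claim ``directly follows from the fact that $f\in\operatorname{SLLL}_0[A]$ and Lemma~\ref{lm:ineq_x_N},'' and your write-up simply fills in the intended details (integral representation of the increment, the sublinear growth bound, and the uniform-in-$N$ a priori estimates, where you rightly also invoke Lemma~\ref{lm:ineq_last_two} to control $\varsigma_p(\hat{m}^N)$ and $\|\hat{y}^N\|$ appearing in that bound).
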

\begin{proof}
	This corollary directly follows from the fact that $f\in \operatorname{SLLL}_0[A]$ and Lemma~\ref{lm:ineq_x_N}.
\end{proof}

In the following, we denote 
\[\hat{\chi}^N\triangleq \gamma^N_{N}.\]

\begin{lemma}\label{lm:chi_N_tight} The probabilities $\hat{\chi}^N$ are tight.
\end{lemma}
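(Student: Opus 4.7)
The plan is to invoke the Ascoli--Arzel\`a characterization of compact subsets of $\Gamma=C([0,T];\rd)$: a set $K\subset\Gamma$ is relatively compact precisely when $\{x(0):x(\cdot)\in K\}$ is bounded in $\rd$ and $K$ is equicontinuous. Since $\hat{\chi}^N=\widehat{\mathscr{T}}^N_N\sharp\alpha$ and $\alpha\in\mathcal{A}[m_0]$ has $\rd$-marginal $m_0\in\prd$, the initial values $x_0$ will supply boundedness via the finite $p$-th moment of $m_0$, while Lemma~\ref{corallary:hat_x_N_Lip} will supply equicontinuity (Lipschitz continuity) on each set of initial values of bounded norm.

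In detail, for $R>0$, let $L^3_R$ be the constant furnished by Lemma~\ref{corallary:hat_x_N_Lip} (Lipschitz constant for $\hat{x}^N(\cdot,x_0,\xi)$ whenever $\|x_0\|\le R$, independent of $N$ and $\xi$), and define
\[K_R\triangleq\bigl\{x(\cdot)\in\Gamma:\|x(0)\|\le R,\ \|x(t)-x(s)\|\le L^3_R|t-s|\text{ for all }s,t\in[0,T]\bigr\}.\]
The set $K_R$ is uniformly bounded (since $\|x(t)\|\le R+L^3_R T$) and uniformly equicontinuous, hence relatively compact in $\Gamma$ by Ascoli--Arzel\`a; its closure $\overline{K_R}$ is compact. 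By the construction of $\hat{\chi}^N$ together with Lemma~\ref{corallary:hat_x_N_Lip}, whenever $(x_0,\xi)$ satisfies $\|x_0\|\le R$, the trajectory $\widehat{\mathscr{T}}^N_N(x_0,\xi)=\hat{x}^N(\cdot,x_0,\xi)$ belongs to $K_R$. Consequently
\[\hat{\chi}^N(\overline{K_R})\ge\hat{\chi}^N(K_R)=\alpha\bigl(\{(x_0,\xi):\hat{x}^N(\cdot,x_0,\xi)\in K_R\}\bigr)\ge\alpha(\mathbb{B}^d_R\times\mathcal{U})=m_0(\mathbb{B}^d_R).\]

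Finally, since $m_0\in\prd$, Markov's inequality gives
\[m_0(\rd\setminus\mathbb{B}^d_R)\le\frac{\varsigma_p^p(m_0)}{R^p}\xrightarrow[R\to\infty]{}0.\]
Hence, for any $\varepsilon>0$, choosing $R$ so that $\varsigma_p^p(m_0)/R^p<\varepsilon$ yields a compact set $\overline{K_R}\subset\Gamma$ with $\hat{\chi}^N(\overline{K_R})\ge 1-\varepsilon$ uniformly in $N$, which is exactly tightness of $\{\hat{\chi}^N\}$.

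I expect no serious obstacle here: the only point that requires a little care is verifying that the Lipschitz constant $L^3_R$ produced by Lemma~\ref{corallary:hat_x_N_Lip} is genuinely uniform in $N$ and $\xi$ (not merely in $x_0$ within the ball), but that is precisely what is stated in that lemma. All other ingredients---Ascoli--Arzel\`a and the Markov tail estimate for a measure with finite $p$-th moment---are routine.
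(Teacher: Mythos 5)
Your proof is correct and follows essentially the same route as the paper: both use Lemma~\ref{corallary:hat_x_N_Lip} to place the trajectories with $\|x_0\|\le R$ into an Arzel\`a--Ascoli compact set of uniformly bounded, uniformly Lipschitz curves, and then transfer the tightness of $m_0$ to $\hat{\chi}^N$ via the push-forward identity $\hat{\chi}^N=\widehat{\mathscr{T}}^N_N\sharp\alpha$ and the marginal condition $\alpha\in\mathcal{A}[m_0]$. Your version is marginally more explicit (Markov's inequality for the tail of $m_0$, and working with the preimage rather than the image set), but it is the same argument.
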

\begin{proof}
	Let $C>0$. We consider the set $K_C\subset\Gamma$ that consists of all curves $x(\cdot)$ such that \[\|x(\cdot)\|\leq (C+T)e^{AT}+(T+\|y_0\|+2\varsigma_p(m_0))e^{2AT}\] those a Lipschitz continuous with the constant equal to $L_C^3$. The set $K_C$ is compact by the Arzela–Ascoli theorem. Now, let 
	\[K'_C=\{x(\cdot,x_0,\xi):\|x_0\|\leq C,\xi\in\mathcal{U}\}.\] By Corollary~\ref{corallary:hat_x_N_Lip}, $K'_C\subset K_C$. This gives that 
	\begin{equation}\label{ineq:chi_N_K}\hat{\chi}^N(K_C')\leq \hat{\chi}^N(K_C).\end{equation} Now let us estimate $\hat{\chi}^N(K_C')$. Due to construction of $\hat{\chi}^N\triangleq \hat{\gamma}^N_N$ (see~\ref{intro:hat_m_N}), we have that
	\[\hat{\chi}^N(K_C')=\alpha\{(x_0,\xi):\|x_0\|\leq C\}=m_0(\mathbb{B}_C^d).\] This and~\eqref{ineq:chi_N_K} yield that 
	\[\hat{\chi}^N(K_C)\geq m_0(\mathbb{B}_C^d).\]
	Since $m_0$ is tight, we have that the family of probabilities $\{\hat{\chi}^N\}_{N=1}^\infty$ is tight.
\end{proof}

\begin{proof}[Proof of Theorem~\ref{th:existence}.Existence]
	By Lemma~\ref{lm:ineq_last_two} and Corollary~\ref{corollary:hat_y_N_Lip}, the sequence $\{\hat{y}^N(\cdot)\}_{N=1}^\infty$ is precompact. Furthermore, we claim that the sequence of measures $\{\hat{\chi}^N\}_{N=1}^\infty$ is precompact. Indeed, by Lemma~\ref{lm:chi_N_tight}, it is tight. Moreover, Lemma~\ref{lm:ineq_x_N} implies that
	\[\int_{\|x(\cdot)\|\leq C}\|x(\cdot)\|^p\hat{\chi}^N(d(x(\cdot)))\leq \int_{\|x_0\|\leq C}\|\hat{x}^N(\cdot,x_0,\xi)\|^p\alpha(d(x_0,\xi)).\] The right-hand side in this inequality  tends to zero uniformly w.r.t. $N$. Thus, the probabilities $\hat{\chi}^N$ have uniformly integrable $p$-th moments.
	
	Therefore (see \cite[Proposition 7.1.5]{Ambrosio}), there exist a sequence $\{N_k\}$ and a pair $(\chi,y(\cdot))$ such that 
	\[W_p(\hat{\chi}^{N_k},\chi)\rightarrow 0,\ \ \|\hat{y}^{N_k}(\cdot)-y(\cdot)\|\rightarrow 0,\text{ as }k\rightarrow \infty.\] Additionally, we put
	\[m(t)\triangleq e_t\sharp \chi.\] Now let us prove that $(m(\cdot),y(\cdot))$ is a motion for  system~\eqref{system:minor},~\eqref{system:major}. Notice that $e_t$ is a continuous operator from $\Gamma$ to $\rd$. Hence,
	\[m(t)=\lim_{k\rightarrow\infty}\hat{m}^{N_k}(t).\] This, construction of functions $\hat{y}^N(\cdot)$ (see~\eqref{intro:hat_y_N}) imply that $y(\cdot)$ satisfies~\eqref{eq:y_relaxed} and, thus,
	\[y(\cdot)=\mathscr{T}^0[g,m(\cdot),y_0,\zeta].\]
	
	It remains to prove that $\chi=\mathscr{T}_0[f,m(\cdot),y(\cdot),\cdot,\cdot,\zeta]\sharp\alpha$. To this end, we denote $\chi^\sharp\triangleq \mathscr{T}_0[f,m(\cdot),y(\cdot),\cdot,\cdot,\zeta]\sharp\alpha$ and prove that $W_p(\chi^\sharp,\hat{\chi}^{N_k})\rightarrow 0$. Passing to the limit in Lemma~\ref{lm:ineq_last_two}, we have that 
	\begin{equation}\label{ineq:y_m_limit}
		\|y(t)\|+\varsigma_p(m(t))\leq (t+\|y_0\|+\varsigma_p(m_0))e^{2At}.
	\end{equation} Since we fix dynamics, initial data and control, we simplify notation and put $x(\cdot,x_0,\xi)\triangleq \mathscr{T}_0[f,m(\cdot),y(\cdot),x_0,\xi,\zeta](\cdot)$. By  Gronwall's inequality,
	\begin{equation}\label{ineq:x_x_0_uniform}\|x(t,x_0,\xi)\|\leq  (\|x_0\|+At+A(t+\|y_0\|+\varsigma_p(m_0))e^{2At})e^{At}.\end{equation} Therefore, for each $C>0$, there exists a constant $L^4_C$ such that the functions $x(\cdot,x_0,\xi)$ are Lipschitz continuous with the constant $L^4_C$ whenever $\|x_0\|\leq C$. 
	
	Let $\varepsilon>0$.
	For each $C>0$, we have that 
	\begin{equation}\label{ineq:chi_sharp_chi_N}\begin{split}
			W_p^p(\chi^\sharp,\hat{\chi}^{N_k})\leq \int_{(\rd\setminus\mathbb{B}_C^d)\times \mathcal{U}}\|x(&\cdot,x_0,\xi)\|^p\alpha(d(x_0,\xi))\\&+ \int_{(\rd\setminus\mathbb{B}_C^d)\times \mathcal{U}}\|\hat{x}^{N_k}(\cdot,x_0,\xi)\|^p\alpha(d(x_0,\xi))\\&+
			\int_{\mathbb{B}_C^d\times\mathcal{U}}\|x(\cdot,x_0,\xi)-\hat{x}^{N_k}(\cdot,x_0,\xi)\|^p\alpha(d(x_0,\xi)).
		\end{split} 
	\end{equation} Choosing $C$ sufficiently large and using the tightness of $m_0$, we can assume that the first two integrals are less than $\varepsilon$. 
	
	Now, let us consider the third integral. We have that 
	\[\begin{split}\|x(t,x_0,\xi)-\hat{x}^{N_k}(t,x_0,\xi&)\|\\\leq \int_{[0,t]\times U}\|f_I(\tau,x(\tau&,x_0,\xi),y(\tau),m(\tau),u)\\- f_I(\tau,\hat{x}&{}^{N_k}(\tau-T/N_k,x_0,\xi),\hat{y}^{N_k}(\tau-T/N_k),\hat{m}^{N_k}(\tau-T/N_k),u)\| \xi(d(\tau,u))\\
		+\int_{[0,t]\times V}\|f_{II}(\tau&,x(\tau,x_0,\xi),y(\tau),m(\tau),v)\\- f_{II}&(\tau,\hat{x}^{N_k}(\tau-T/N_k,x_0,\xi),\hat{y}^{N_k}(\tau-T/N_k),\hat{m}^{N_k}(\tau-T/N_k),v)\| \zeta(d(\tau,v)).\end{split}\] Using the fact that the functions $\hat{y}^{N_k}(\cdot)$, $\hat{m}^{N_k}(\cdot)$ are Lipschitz continuous with the constants those do not depend on $N_k$, the functions $\hat{x}^{N_k}(\cdot,x_0,\xi)$ are also Lipchitz continuous whenever $\|x_0\|\leq C$ and the function $f$ is locally Lipschitz continuous, we have that 
	\[\begin{split}\|x(t,x_0,\xi)-\hat{x}^{N_k}(t,x_0,\xi&)\|\\\leq \int_{[0,t]\times U}\|f_I(\tau,x(\tau,&x_0,\xi),y(\tau),m(\tau),u)\\- f_I&(\tau,\hat{x}^{N_k}(\tau,x_0,\xi),\hat{y}^{N_k}(\tau),\hat{m}^{N_k}(\tau),u)\|\xi(d(\tau,u))\\+
	\int_{[0,t]\times V}\|f_{II}(\tau,&x(\tau,x_0,\xi),y(\tau),m(\tau),v)\\&- f_{II}(\tau,\hat{x}^{N_k}(\tau,x_0,\xi),\hat{y}^{N_k}(\tau),\hat{m}^{N_k}(\tau),v)\|\zeta(d(\tau,v))
		+a_1(C)/N_k.\end{split}\] Since the functions  $x(\tau,x_0,\xi)$ and $\hat{x}^{N_k}(\tau,x_0,\xi)$ are uniformly bounded whenever $\|x_0\|\leq C$ (see Lemma~\ref{lm:ineq_x_N} and~\eqref{ineq:x_x_0_uniform}), we use the fact that $f$ is locally Lipschitz continuous and obtain that 
	\[\|x(t,x_0,\xi)-\hat{x}^{N_k}(t,x_0,\xi)\|\leq a_2(C,N_k)/N_k.\] Here $a_2(C,N_k)$ is a function of $C$ that is bounded for each fixed $C>0$. Therefore,
	\[\int_{\mathbb{B}_C\times\mathcal{U}}\|x(\cdot,x_0,\xi)-\hat{x}^{N_k}(\cdot,x_0,\xi)\|^p\alpha(d(x_0,\xi))\leq \Big(\frac{a_2(C,N_k)}{N_k}\Big)^p.\] Choosing sufficiently large $k$, we can ensure that 
	the third integral in the right-hand side of~\eqref{ineq:chi_sharp_chi_N} is less than $\varepsilon$. Therefore, 
	\[W_p^p(\chi^\sharp,\hat{\chi}^{N_k})\leq 2\varepsilon,\] for sufficiently large $N_k$. This means that 
	$\hat{\chi}^{N_k}\rightarrow \chi^\sharp$. Thus, $\chi=\chi^\sharp=\mathscr{T}_0[f,m(\cdot),y(\cdot),\cdot,\cdot,\zeta]\sharp\alpha$.
\end{proof}

\section{Uniqueness and stability analysis}\label{sect:uniqueness_stability}
The purpose of this section is to prove the uniqueness part of Theorem~\ref{th:existence} and Theorem~\ref{th:stability}. This will rely on the key estimate proved below.

\subsection{Key estimate}
Let $A>0$, $f,f'\in\operatorname{SLLL}_0[A]$, $g,g'\in\operatorname{SLLL}^0[A]$, $m_0,m_0'\in\prd$, $y_0,y_0'\in \rdp$, $\alpha\in\mathcal{A}[m_0]$, $\alpha'\in\mathcal{A}[m_0']$, $\zeta,\zeta'\in \mathcal{V}$. Further, denote by $(m(\cdot),y(\cdot))$ the motion generated by the dynamics $f$, $g$, initial conditions $m_0$, $y_0$ and controls $\alpha$ and $\zeta$. Additionally, let  the dynamics $f'$, $g'$, initial conditions $m_0'$, $y_0'$ and controls $\alpha'$ and $\zeta'$ produce a motion $(m'(\cdot),y'(\cdot))$. Further, it is convenient  denote by $x(\cdot,x_0,\xi)$ a solution of the differential equation
\[\begin{split}
\frac{d}{dt}x(t)=x_0&+\int_{[0,t]\times U}f_I(\tau,x(\tau),m(\tau),y(\tau),u)\xi(d(\tau,u))\\&+\int_{[0,t]\times V}f_{II}(\tau,x(\tau),m(\tau),y(\tau),v)\zeta(d(\tau,v)).
\end{split}\] Here \[f(t,x,m,y,u,v)=f_I(t,x,m,y,u)+f_{II}(t,x,m,y,v).\]
Analogously, let $x'(\cdot,x_0,\xi)$ stand for a solution of the differential equation
\[\begin{split}
	\frac{d}{dt}x(t)=x_0&+\int_{[0,t]\times U}f_I'(\tau,x(\tau),m(\tau),y(\tau),u)\xi(d(\tau,u))\\&+\int_{[0,t]\times V}f_{II}'(\tau,x(\tau),m(\tau),y(\tau),v)\zeta'(d(\tau,v)).
\end{split}\] As above, we use the representation 
\[f'(t,x,m,y,u,v)=f_I'(t,x,m,y,u)+f_{II}'(t,x,m,y,v).\]

Let $\pi$ be a plan between $\alpha$ and  $\alpha'$.

Put 
\begin{equation}\label{key_estimate:intro:tilde_a}
	\tilde{a}(t)\triangleq \int_{[0,t]\times V}\psi(\tau,v)\zeta(d(\tau,v))-\int_{[0,t]\times V}\psi(\tau,v)\zeta'(d(\tau,v)),
\end{equation} while
\begin{equation}\label{key_estimate:intro:psi}
	\psi(t,v)\triangleq g(t,y(t),m(t),v).
\end{equation}  Furthermore, set 
\begin{equation}\label{key_estimate:intro:varphi}\varphi(t,x_0,\xi,u)\triangleq f_I(t,x(t,x_0,\xi),m(t),y(t),u).\end{equation} Additionally, we will use the notation
\begin{equation}\label{key_estimate:intro:hat_a}
	\hat{a}(t,x_0,\xi,x_0',\xi')\triangleq \Big\|\int_{[0,t]\times U} \varphi(\tau,x_0,\xi,u)\xi(d(\tau,u))-\int_{[0,t]\times U} \varphi(\tau,x_0,\xi,u)\xi'(d(\tau,u))\Big\|.
\end{equation} Finally, let 
\begin{eqnarray}\label{key_estimate:intro:chi}
	\eta(t,v)\triangleq \int_{\rd\times\mathcal{U}}f_{II}(t,x(t,x_0,\xi),m(t),y(t),v)\alpha(d(x_0,\xi)). 
\end{eqnarray}
\begin{equation}\label{key_estimate:intro:bar_a}
	\bar{a}(t)\triangleq \Bigg|\int_{[0,t]\times V}\eta(\tau,v)\zeta(d(\tau,v)) - \int_{[0,t]\times V}\eta(\tau,v)\zeta'(d(\tau,v)) \Bigg|.
\end{equation}

\begin{lemma}\label{lm:key_estimate}
	Let $c>0$ be such that $\mathscr{G}(\|y_0'\|,\varsigma_p(m_0'),A)\leq c$. Then, for each $t\in [0,T]$,
	\[\begin{split}
		W_p(m(t),m'&(t))+\|y(t)-y'(t)\|\\\leq C_0(W_p&(m_0,m_0')+\|y_0-y_0'\|)+C_1(\operatorname{dist}_c(f,f')+\operatorname{dist}_c(g,g'))\\+C_2&\int_{(\rd\times U)\times (\rd\times U)}\hat{a}(t,x_0,\xi,x_0',\xi')\pi(d(x_0,\xi,x_0',\xi'))+C_3\tilde{a}(t)+C_4\bar{a}(t).
	\end{split}
	\] Here $C_0$, $C_1$, $C_2$, $C_3$, $C_4$ are the constant depending only on $\|y_0\|$, $\|y_0'\|$, $\varsigma_p(m_0)$, $\varsigma_p(m_0')$.
\end{lemma}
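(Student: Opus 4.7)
The approach is a coupled Gronwall estimate for $H(t)\triangleq W_p(m(t),m'(t))+\|y(t)-y'(t)\|$. The plan $\pi$ between $\alpha$ and $\alpha'$ supplies a natural coupling: pushing it forward under $(x_0,\xi,x_0',\xi')\mapsto(x(t,x_0,\xi),x'(t,x_0',\xi'))$ yields a transport plan between $m(t)$ and $m'(t)$, hence
\[
W_p^p(m(t),m'(t))\le\int\|x(t,x_0,\xi)-x'(t,x_0',\xi')\|^p\,\pi(d(x_0,\xi,x_0',\xi')).
\]
The hypothesis $\mathscr{G}(\|y_0'\|,\varsigma_p(m_0'),A)\le c$, combined with the a priori growth bound of Lemma~\ref{lm:ineq_last_two} passed to the limit (see~\eqref{ineq:y_m_limit}) and its analogue for the primed motion, places both trajectories in the region $\mathbb{B}_{p,c}\times\mathbb{B}^{d'}_c$ on which the local Lipschitz constants $B_c,B_c'$ of $f,g$ are effective.

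The first step is the pointwise estimate for $\|y(t)-y'(t)\|$: subtracting the two $y$-equations and adding and subtracting $\int g(\tau,y(\tau),m(\tau),v)\,\zeta'(d(\tau,v))$ isolates the initial gap $\|y_0-y_0'\|$, the control difference $\tilde a(t)$ from~\eqref{key_estimate:intro:tilde_a}, a $g-g'$ contribution bounded by $T\operatorname{dist}_c(g,g')$, and a Lipschitz integral $B_c'\int_0^t[\|y-y'\|+W_p(m,m')]\,d\tau$. The second step is the analogous pointwise decomposition of $x(t,x_0,\xi)-x'(t,x_0',\xi')$ for $\pi$-a.e.\ quadruple. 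Using~\eqref{assump:f_sum} and adding and subtracting $f_I(\tau,x(\tau,x_0,\xi),m,y,u)\,d\xi'$ and $f_{II}(\tau,x(\tau,x_0,\xi),m,y,v)\,d\zeta'$ produces $\|x_0-x_0'\|$, the $\xi$-control term $\hat a(t,x_0,\xi,x_0',\xi')$ from~\eqref{key_estimate:intro:hat_a}, an $(x_0,\xi)$-dependent $\zeta$-perturbation $A_5(t,x_0,\xi)\triangleq\int_{[0,t]\times V}f_{II}(\tau,x(\tau,x_0,\xi),m,y,v)[\zeta-\zeta'](d(\tau,v))$, a term bounded by $T\operatorname{dist}_c(f,f')$, and the Lipschitz integral $B_c\int_0^t[\|x(\tau)-x'(\tau)\|+W_p(m,m')+\|y-y'\|]\,d\tau$.

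The final step is to eliminate the self-feedback $\|x(\tau)-x'(\tau)\|$ from the pointwise inequality by Gronwall, integrate against $\pi$, combine with the $y$-inequality, and close Gronwall on $H(t)$. The main obstacle is matching the $\zeta$-perturbation to the target quantity $\bar a(t)$ from~\eqref{key_estimate:intro:bar_a}: a naive $L^p(\pi)$-bound on the pointwise estimate would yield $(\int\|A_5(t,\cdot)\|^p\,d\alpha)^{1/p}$, while the lemma asserts the sharper $\bar a(t)=\|\int A_5(t,\cdot)\,\alpha(dx_0,d\xi)\|=\|\int\eta\,[\zeta-\zeta']\|$. Bridging this gap is the bulk of the technical work: one must isolate the $A_5$-contribution additively in a Duhamel-type expansion of $x-x'$, exploiting that the $\zeta$-dependence of the vector field is common to all minor agents so that only its $\alpha$-barycenter appears in the $m$-evolution, while the $(x_0,\xi)$-fluctuations of $A_5$ are absorbed into the Lipschitz integrals in $W_p(m,m')$. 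Once this is done, Gronwall's lemma applied to the resulting closed linear inequality in $H(t)$ yields the stated bound, with $C_0,\dots,C_4$ depending only on $T$, $B_c$, $B_c'$, and hence on $\|y_0\|,\|y_0'\|,\varsigma_p(m_0),\varsigma_p(m_0')$.
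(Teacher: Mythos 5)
Your architecture is the same as the paper's: uniform a priori bounds place both motions in a ball where a single Lipschitz constant works; a pointwise integral inequality for $x(t,x_0,\xi)-x'(t,x_0',\xi')$ is closed in its self-feedback term by Gronwall; it is then integrated against the coupling $\pi$ to control $W_p(m(t),m'(t))$; the analogous inequality for $y-y'$ produces $\tilde a$; and a second Gronwall on the sum finishes. The terms $\|x_0-x_0'\|$, $T\operatorname{dist}_c(f,f')$, $T\operatorname{dist}_c(g,g')$, $\hat a$ and $\tilde a$ all arise exactly as in the paper.

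The gap is precisely at the point you flag as ``the bulk of the technical work'' and then do not carry out: the $f_{II}$--$\zeta$ term. You are right that the pointwise perturbation $A_5(t,x_0,\xi)=\int_{[0,t]\times V}f_{II}(\tau,x(\tau,x_0,\xi),m(\tau),y(\tau),v)\,[\zeta-\zeta'](d(\tau,v))$ is agent-dependent while $\bar a(t)$ in \eqref{key_estimate:intro:bar_a} is the norm of its $\alpha$-average, and that the direct bound only yields $\bigl(\int\|A_5\|^p\,d\alpha\bigr)^{1/p}\ge\bar a(t)$. But the proposed bridge fails: it is not true that only the $\alpha$-barycenter of $A_5$ enters the evolution of $m$. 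The distance $W_p(m(t),m'(t))$ sees the full agent-wise displacement, and the fluctuations of $A_5$ about its mean move individual trajectories at leading order; they are a source term of the same size as the left-hand side, not a remainder absorbable into the Lipschitz integrals. A sanity check: take $f_I\equiv0$, $g\equiv0$, $f'=f$, $m_0'=m_0$, $\alpha'=\alpha$, $f_{II}(t,x,m,y,v)=\sigma(x)k(v)$ with $\sigma$ bounded, Lipschitz and odd, and $m_0$ symmetric; then $\eta(\tau,\cdot)=O(\tau)$, so $\bar a(t)=O(t^2)$, while $\hat a\equiv\tilde a\equiv0$ and yet for suitable $\zeta\neq\zeta'$ the two flows dilate and contract $m_0$ differently, so $W_p(m(t),m'(t))$ is of order $t$ (compare $p$-th moments). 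So the inequality cannot be closed with $\bar a(t)$ by your route. For what it is worth, the paper does not close it either --- in \eqref{key_estimate:ineq:x_Lip} it writes $\bar a(t)$ where $\|A_5(t,x_0,\xi)\|$ belongs --- so you have located a genuine soft spot rather than missed an available idea. The clean repair is to prove the lemma with $C_4\bigl(\int_{\rd\times\mathcal U}\sup_{s\le t}\|A_5(s,x_0,\xi)\|^p\,\alpha(d(x_0,\xi))\bigr)^{1/p}$ in place of $C_4\bar a(t)$: this term vanishes when $\zeta=\zeta'$ (so uniqueness is untouched) and tends to $0$ when $\zeta^k\to\zeta$ narrowly, by the argument of Proposition~\ref{prop:action_major} applied agent-wise together with dominated convergence, so Theorem~\ref{th:stability} also survives.
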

\begin{proof}
	Due to the sublinear growth property (see~\eqref{ineq:y_m_limit}), we have that  the functions $\|y(\cdot)\|$, $\varsigma_p(m(\cdot))$,  $\|y'(\cdot)\|$, $\varsigma_p(m'(\cdot))$ are uniformly bounded. 
	Thus, there exists a constant $L^*$ such that 
	\[\begin{split}\|f_I(t,x(t,x_0,&\xi),m(t),y(t),u)-f_I(t,x'(t,x_0',\xi'),m'(t),y'(t),u)\|\\&\leq L^*(\|x(t,x_0,\xi)-x(t,x_0,\xi)\|+W_p(m(t),m'(t))+\|y(t)-y'(t)\|),\end{split}\]
	\[\begin{split}\|f_{II}(t,x(t,x_0,&\xi),m(t),y(t),v)-f_{II}(t,x'(t,x_0',\xi'),m'(t),y'(t),v)\|\\&\leq L^*(\|x(t,x_0,\xi)-x(t,x_0,\xi)\|+W_p(m(t),m'(t))+\|y(t)-y'(t)\|),\end{split}\]
	\[\|g(t,y(t),m(t),v)-f(t,y'(t),m'(t),v)\leq L^*(\|y(t)-y'(t))\|+W_p(m(t),m'(t))).\] 
	Moreover, notice that 
	\[\begin{split}
	\|f'_I(&t,x'(t,x_0',\xi'),m'(t),y'(t),u)+f'_{{II}}(t,x'(t,x_0',\xi'),m'(t),y'(t),v)\\&-f_I(t,x'(t,x_0',\xi'),m'(t),y'(t),u)-f_{II}(t,x'(t,x_0',\xi'),m'(t),y'(t),v)\|\leq \operatorname{dist}_c(f,f').\end{split}\]
	
	Therefore, we have that
	\begin{equation}\label{key_estimate:ineq:x_Lip}\begin{split}\|x(t,x_0,\xi)-x(t,&x_0',\xi')\|\leq \|x_0-x_0'\|+T\operatorname{dist}_c(f,f')\\&+ L^*\int_0^t\|x(\tau,x_0,\xi)-x(\tau,x_0,\xi)\|d\tau \\&+L^*\int_0^t(W_p(m(\tau),m'(\tau))+\|y(\tau)-y'(\tau)\|)d\tau+\bar{a}(t)\\&+
			\int_{[0,t]\times U} \varphi(\tau,x_0,\xi,u)\xi(d(\tau,u))-\int_{[0,t]\times U} \varphi(\tau,x_0,\xi,u)\xi'(d(\tau,u))
			.\end{split}\end{equation}  Here $\varphi$ is introduced by~\eqref{key_estimate:intro:varphi}, whereas $\bar{a}$ is determined by \eqref{key_estimate:intro:bar_a}.
	
	Applying  Gronwall's inequality to~\eqref{key_estimate:ineq:x_Lip}, we conclude that 
	\begin{equation*}\label{key_estimate:ineq:x_m}\begin{split}
			\|x(t,&x_0,\xi)-x(t,x_0',\xi')\|\leq e^{L^*t}\|x_0-x_0'\|+Te^{L^*t}\operatorname{dist}_c(f,f')\\&+ L^*e^{L^*t}\int_0^t(W_p(m(\tau),m'(\tau))+\|y(\tau)-y'(\tau)\|)d\tau+e^{L^*t}\bar{a}(t)+e^{L^*t}\hat{a}(t,x_0,\xi,x_0',\xi').\end{split}\end{equation*} Here, $\hat{a}$ is defined in~\eqref{key_estimate:intro:hat_a}.
	
	Integrating both sides w.r.t. to probability $\pi$, we have that 
	\begin{equation}\label{key_estimate:ineq:nu_Gronwall}\begin{split}
			W_p(m(t),m'(t))\leq e^{L^*t}W_p(&m_0,m_0')+e^{L^*t}T\operatorname{dist}_c(f,f')\\ L^*e^{L^*t}&\int_0^t(W_p(m(\tau),m'(\tau))+\|y(\tau)-y'(\tau)\|)d\tau+e^{L^*t}\bar{a}(t)\\&+e^{L^*T}\int_{(\rd\times U)\times (\rd\times U)}\hat{a}(t,x_0,\xi,x_0',\xi')\pi(d(x_0,\xi,x_0',\xi')).\end{split}\end{equation}
	
	Now let us use the estimate between $g$ and $g'$ as well as the Lipschitz continuity of the function $g$. We have that 
	\[\begin{split}
		\|y(t)-y'(t)\|\leq \|y_0&-y_0'\|+t\operatorname{dist}_c(f,f')\\&+ L^*\int_{0}^t(W_p(m(\tau),m'(\tau))+\|y(\tau)-y'(\tau)\|)d\tau+\tilde{a}(t),
	\end{split}\] where we use notation introduced in~\eqref{key_estimate:intro:tilde_a}. 
	
	Summing this inequality with~\eqref{key_estimate:ineq:nu_Gronwall}, we obtain the estimate
	\begin{equation}\label{key_estimate_ineq:nu_y_Gronwall}
		\begin{split}
			(W_p(m(t),m'(t))&+\|y(t)-y'(t)\|)\\\leq e^{L^*t}(W_p(&m_0,m_0'))+\|y_0-y_0'\|+	e^{L^*t}T(\operatorname{dist}_c(f,f')+\operatorname{dist}_c(g,g'))+
			\\+ (L^*e^{L^*t}&+L^*)\int_0^t(W_p(m(\tau),m'(\tau))+\|y(\tau)-y'(\tau)\|)d\tau\\+e^{L^*T}&\int_{(\rd\times U)\times (\rd\times U)}\hat{a}(t,x_0,\xi,x_0',\xi')\pi(d(x_0,\xi,x_0',\xi'))+e^{L^*t}\bar{a}(t)+\tilde{a}(t).
		\end{split}
	\end{equation} Using  Gronwall's inequality, we arrive at the statement of the lemma.
\end{proof}

\subsection{Uniqueness of the motion}
\begin{proof}[Proof of Theorem~\ref{th:existence}. Uniqueness]
	The uniqueness of the motion directly follows from Lemma~\ref{lm:key_estimate} and the facts that for $\zeta=\zeta'$, $\tilde{a}(t)=\bar{a}(t)=0$ while, if $\alpha=\alpha'$, $\pi$ is an optimal plan between them,
	\[\int_{(\rd\times U)\times (\rd\times U)}\hat{a}(t,x_0,\xi,x_0',\xi')\pi(d(x_0,\xi,x_0',\xi'))\] is also equal to zero.
\end{proof}

\subsection{Action of distribution of controls of minor agents}
In this subsection, we prove the following.

\begin{proposition}\label{prop:integral_convergence} Let 
	\begin{itemize}	
		\item $\varphi$ be defined by~\eqref{key_estimate:intro:varphi};
		\item $m_0\in\prd$, $\{m_0^k\}_{k=1}^\infty\in\prd$, $\alpha\in\mathcal{P}(\rd\times\mathcal{U})$, $\{\alpha^k\}\subset\mathcal{P}(\rd\times\mathcal{U})$ be such that $\alpha\in\mathcal{A}[m_0]$  $\alpha^k\in\mathcal{A}[m_0^k]$, while  $W_p(m_0,m_0^k)$ and $W_p(\alpha,\alpha^k)$ tend to zero;
		\item $\pi^k$ be an optimal plan between $\alpha$ and $\alpha^k$.
	\end{itemize}	
	Then, \[\begin{split}
		\int_{(\rd\times U)\times (\rd\times U)}\Big\|\int_{[0,t]\times U} \varphi&(\tau,x_0,\xi,u)\xi(d(\tau,u))\\-\int_{[0,t]\times U} \varphi(&\tau,x_0,\xi,u)\xi'(d(\tau,u))\Big\|\pi^k(d(x_0,\xi,x_0',\xi'))\rightarrow 0\text{ as }k\rightarrow\infty.\end{split}\]
\end{proposition}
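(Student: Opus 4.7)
The plan is to bound the integrand uniformly by an $m_0$-integrable function of $x_0$, and then conclude via a truncation combined with narrow convergence of the optimal plans $\pi^k$ to the diagonal coupling of $\alpha$ with itself. Denote the integrand by
\[H(x_0,\xi,x_0',\xi')\triangleq\Big\|\int_{[0,t]\times U}\varphi(\tau,x_0,\xi,u)\xi(d(\tau,u))-\int_{[0,t]\times U}\varphi(\tau,x_0,\xi,u)\xi'(d(\tau,u))\Big\|.\]
Using the sublinear growth of $f_I$ together with the a priori bound~\eqref{ineq:x_x_0_uniform} on $\|x(\tau,x_0,\xi)\|$ and~\eqref{ineq:y_m_limit} for $y(\cdot)$ and $m(\cdot)$, one obtains $H(x_0,\xi,x_0',\xi')\leq C_*(1+\|x_0\|)$ with $C_*$ depending only on $A$, $T$, $\|y_0\|$, $\varsigma_p(m_0)$. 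The decisive feature is that this dominating function depends only on the first-marginal variable $x_0$, whose law under $\pi^k$ is the fixed measure $m_0\in\prd$ with finite first moment (since $p\geq 1$).

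The second step is joint continuity of $H$ on $(\rd\times\mathcal{U})^2$ together with the observation that $H$ vanishes on the diagonal. Continuity in $x_0$ at fixed $\xi$ is standard ODE continuous dependence. Continuity in $\xi$ under narrow convergence is the classical relaxed-controls statement: if a sequence $\xi^j\to\xi^*$ narrowly in $\mathcal{U}$, then the trajectories $x^j(\cdot)\triangleq x(\cdot,x_0,\xi^j)$ are uniformly bounded and equi-Lipschitz (sublinear growth of $f$ combined with the fact that $\xi^j(du\,|\,\tau)$ is a probability for a.e.\ $\tau$), so Arzel\`a--Ascoli yields a uniform limit $x^*$; passing to the limit in the integral equation defining $x^j$ against the bounded continuous integrand $(\tau,u)\mapsto f_I(\tau,x^*(\tau),m(\tau),y(\tau),u)$, and invoking uniqueness of the ODE, identifies $x^*$ with $x(\cdot,x_0,\xi^*)$. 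Composing with the continuity of $\xi\mapsto\int\phi\,d\xi$ for fixed $\phi\in C_b([0,T]\times U;\rd)$ gives joint continuity of $H$.

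Finally, optimality of $\pi^k$ together with $W_p(\alpha,\alpha^k)\to 0$ yields narrow convergence of $\pi^k$ to the diagonal coupling $\bar\pi\triangleq(\operatorname{id},\operatorname{id})\sharp\alpha$ (equivalently, the coupling distance $d_{\rd\times\mathcal{U}}((x_0,\xi),(x_0',\xi'))$ tends to $0$ in $\pi^k$-probability and the first marginal is the fixed $\alpha$). For each $M>0$, the truncation
\[\int H\,d\pi^k\leq \int(H\wedge M)\,d\pi^k+\int_{\{\|x_0\|>M/C_*-1\}}C_*(1+\|x_0\|)\,m_0(dx_0)\]
has a $k$-independent tail term that vanishes as $M\to\infty$, while the truncated term tends to $\int(H\wedge M)\,d\bar\pi=0$ (use tightness of $\{\alpha^k\}$ to localize $\pi^k$ on a compact set where $H\wedge M$ is uniformly continuous, and combine with the in-probability convergence of the coupling distance). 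Sending first $k\to\infty$ and then $M\to\infty$ concludes. The main obstacle is the continuity of the solution map $\xi\mapsto x(\cdot,x_0,\xi)$ under narrow convergence of relaxed controls, invoked in the second step; once that is in place, the remainder is a standard truncation plus narrow-convergence argument.
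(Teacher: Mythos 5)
Your proof is correct, but it follows a genuinely different route from the paper's. The paper argues quantitatively: after the same $C_*(1+\|x_0\|)$ bound and the same truncation in $x_0$, it works coordinate-wise, approximates $\varphi_i(\cdot,x_0,\xi,\cdot)$ first by time-cutoff functions, then by $l$-Lipschitz functions, and finally by elements of the countable net that defines the metric $\mathbf{d}$ on $\mathcal{P}([0,T]\times U)$; this converts the inner difference of integrals into an explicit bound of the form $\varepsilon+2^l C_5(1+C)\,\mathbf{d}(\xi,\xi')$, which after integration against $\pi^k$ is controlled by $2^l W_p(\alpha,\alpha^k)$. You instead run a soft argument: joint continuity of the integrand $H$ on $(\rd\times\mathcal{U})^2$, vanishing of $H$ on the diagonal, narrow convergence of the optimal plans $\pi^k$ to the diagonal coupling $(\operatorname{id},\operatorname{id})\sharp\alpha$, and a truncation $H\le (H\wedge M)+C_*(1+\|x_0\|)\mathbf{1}_{\{\|x_0\|>M/C_*-1\}}$ whose tail is controlled by the fixed marginal $m_0$. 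All the steps you invoke are sound: the identification of the narrow limit of $\pi^k$ follows from $\int(d\wedge 1)\,d\pi^k\le W_p(\alpha,\alpha^k)\to 0$ plus tightness, and the continuity of $\xi\mapsto x(\cdot,x_0,\xi)$ follows from Arzel\`a--Ascoli and uniqueness (available from the local Lipschitz hypothesis in $\operatorname{SLLL}_0[A]$); note that passing to the limit in $\int_{[0,t]\times U}(\cdot)\,\xi^j(d(\tau,u))$ uses that $\xi^j(\{t\}\times U)=0$ because the time marginal is Lebesgue, a point worth stating. In fact the paper also implicitly needs continuity of $\varphi$ in $\xi$ (to obtain a uniform modulus of continuity on the compact $[0,T]\times\mathbb{B}^d_C\times\mathcal{U}\times U$), so your route does not really assume more; what the paper's explicit approximation buys is a constructive, essentially rate-giving estimate in terms of $W_p(\alpha,\alpha^k)$, while yours is shorter and dispenses with the net machinery at the cost of a purely qualitative conclusion.
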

\begin{proof}
	Recall that 
	\[\varphi(\tau,x_0,\xi,u)=f(\tau,x(\tau,x_0,\xi),y(\tau),m_0(\tau),u),\] where $x(\cdot,x_0,\xi)=\mathscr{T}_0[f,m(\cdot),y(\cdot),x_0,\xi]$. Since $f$ satisfies sublinear growth condition, as well as $x(\cdot,x_0,\xi)$ (see~\eqref{ineq:x_x_0_uniform}), we have that there exists a constant $C_5$ depending only on $A$, $y_0$ and $\varsigma_p(m_0)$ such that 
	\[\|\varphi(\tau,x_0,\xi,u)\|\leq C_5(1+\|x_0\|).\]
	
	Let $\varepsilon>0$. Choose sufficiently large $C$. We have that 
	\begin{equation}\label{conv_int:ineq:first}\begin{split}\int_{(\rd\times U)\times (\rd\times U)}\Big\|\int_{[0,t]\times U} \varphi(\tau,x_0,\xi,&u)\xi(d(\tau,u))\\-\int_{[0,t]\times U} \varphi(\tau,x_0,\xi,&u)\xi'(d(\tau,u))\Big\|\pi^k(d(x_0,\xi,x_0',\xi'))\\\leq 
			\int_{(\mathbb{B}_C^d\times U)\times (\mathbb{B}_C\times U)}\Big\|\int_{[0,t]\times U} &\varphi(\tau,x_0,\xi,u)\xi(d(\tau,u))\\-\int_{[0,t]\times U} &\varphi(\tau,x_0,\xi,u)\xi'(d(\tau,u))\Big\|\pi^k(d(x_0,\xi,x_0',\xi'))\\&+\int_{\rd\setminus\mathbb{B}_C^d}C_5(1+\|x_0\|)m_0(dx_0)\\&+\int_{\rd\setminus\mathbb{B}_C}C_5(1+\|x_0\|)m_0^k(dx_0).\end{split}
	\end{equation} Due to the fact that $W_p(m_0,m_0^k)\rightarrow\infty$, one can choose $c_*$ such that, for every $C>c_*$, the last two terms are less than given $\varepsilon$.
	
	Notice that, if $\|x_0\|\leq C$, then $\|\varphi(\tau,x_0,\xi,u)\|\leq C_5(1+C)$.
	
	To establish the fact that the first term in the right-hand side of~\eqref{conv_int:ineq:first} is less that $2\varepsilon$ for sufficiently large $k$, we consider the approximation by Lipschitz continuous functions. 
	
	Let us represent the function $\varphi$ in the coordinate-wide form, i.e., $\varphi(\tau,x_0,\xi,u)=(\varphi_i(\tau,x_0,\xi,u))_{i=1}^d$. We have that 
	\begin{equation}\label{conv_int:ineq:I_norms}
		\begin{split}
			\Big\|\int_{[0,t]\times U} &\varphi(\tau,x_0,\xi,u)\xi(d(\tau,u))-\int_{[0,t]\times U} \varphi(\tau,x_0,\xi,u)\xi'(d(\tau,u))\Big\|\\ \leq 
			&\sum_{i=1}^d\Big|\int_{[0,t]\times U} \varphi_i(\tau,x_0,\xi,u)\xi(d(\tau,u))-\int_{[0,t]\times U} \varphi_i(\tau,x_0,\xi,u)\xi'(d(\tau,u))\Big|.
		\end{split}
	\end{equation} Now notice that we consider the functions $\varphi_i(\tau,x_0,\xi,u)$ on a compact set $[0,T]\times\mathbb{B}_C^d\times\mathcal{U}\times U$. There exists a modulus of continuity $\varpi(\cdot)$ such that, if $\tau,\tau'\in [0,T]$, $u,u'\in U$ are such that $|\tau-\tau'|+d_U(u,u')\leq \delta$, then, for every $x_0\in\mathbb{B}_C^d$, $\xi\in\mathcal{U}$,
	\[|\phi_i(\tau,x_0,\xi,u)-\phi(\tau',x_0,\xi,u')|\leq \varpi(\delta).\] Now, let $N$ be a natural number. For, $n=0,\ldots, N$, we set $t_n^N\triangleq Tn/N$. Define 
	\begin{equation}\label{conv_int:inttro:tilde_varphi}
		\tilde{\varphi}_{i,n}^N(\tau,x_0,\xi,u)\triangleq \left\{\begin{array}{ll}
			\varphi_i(\tau,x_0,\xi,u), & \tau\in [0,t_n^N], \\
			\varphi_i(t_n,x_0,\xi,u)(t_{n+1}^l-t)N/T, & \tau\in [t_n^N,t_{n+1}^N], \\
			0, & t\in [t_{n+1}^N,T].
		\end{array}\right.
	\end{equation} If $t\in [t_n^N,t_{n+1}^N]$, then 
	\[|\varphi_i(\tau,x_0,\xi,u)-\tilde{\varphi}_{i,n}^N(\tau,x_0,\xi,u)|\leq C_5(1+C).\] Thus,
	\begin{equation}\label{conv_int:ineq:I_intervals}
		\begin{split}
			\Big|\int_{[0,t]\times U} \varphi_i(\tau,x_0,\xi,u)\xi(d(\tau,u))-\int_{[0,t]\times U} \varphi_i(\tau,x_0,\xi,u)\xi'(&d(\tau,u))\Big|\\ \leq 
			\Big|\int_{[0,T]\times U} \tilde{\varphi}_{i,j}^N(\tau,x_0,\xi,u)\xi(d(\tau,u))-\int_{[0,t]\times U} \tilde{\varphi}_{i,j}^N(\tau,x_0&,\xi,u)\xi'(d(\tau,u))\Big|\\&+2C_5(1+C)T/N.
	\end{split}\end{equation} Notice that each function $\tilde{\varphi}_{i,n}^N$ has a modulus of continuity w.r.t. $\tau$ and $u$ equal to $\tilde{\varpi}^N(\delta)\triangleq \varpi(\delta)\wedge (2C_5(1+C)\delta/N).$ Further, let $l$ be a natural number, $\hat{\varphi}_{i,n}^{N,l}$ be a $l$-Lipschitz continuous function that approximates $[C_5(1+C)]^{-1}\tilde{\varphi}_{i,n}^N$:
	\[\hat{\varphi}_{i,n}^{N,l}(\tau,x_0,\xi,u)\triangleq \inf\{[C_5(1+C)]^{-1}\tilde{\varphi}_{i,n}^N(\tilde{\tau},x_0,\xi,\tilde{u})+l(|\tilde{\tau}-\tau|+d_U(\tilde{u},u)):\tilde{\tau}\in [0,T],\tilde{u}\in U\}.\] Here $d_U$ stands for the metric on $U$, The function 
	$\hat{\varphi}_{i,n}^{N,l}$ is bounded by $C_5(1+C)$ by Proposition \ref{A:prop:Lip}. This proposition also gives that 
	\begin{equation}\label{conv_int:ineq:I_Lip}
		\begin{split}
			\Big|\int_{[0,T]\times U} \tilde{\varphi}_{i,n}^N(\tau,x_0,\xi,u)\xi(d(\tau,u))-&\int_{[0,t]\times U} \tilde{\varphi}_{i,n}^N(\tau,x_0,\xi,u)\xi'(d(\tau,u))\Big|\\ \leq C_5(1+C)\Big|\int_{[0,T]\times U} \hat{\varphi}_{i,n}^{N,l}&(\tau,x_0,\xi,u)\xi(d(\tau,u))\\&-\int_{[0,t]\times U} \hat{\varphi}_{i,n}^{N,l}(\tau,x_0,\xi,u)\xi'(d(\tau,u))\Big| +\tilde{\varpi}^N(2/l).
		\end{split}
	\end{equation} Now let us use the metric $\mathbf{d}$ (see~\eqref{B:intro:metric_narrow}) that metricize the narrow convergence for the case when $W=[0,T]\times U$. One can choose a function $\nu_l^j$ such that, for each $x_0$ and $\xi$,
	\[\|\nu_l^j(\cdot,\cdot)-\hat{\varphi}_{i,n}^{N,l}(\cdot,x_0,\xi,u)\|\leq 2^{-l}.\] Hence,
	\begin{equation*}
			\Big|\int_{[0,T]\times U} \hat{\varphi}_{i,n}^{N,l}(\tau,x_0,\xi,u)\xi(d(\tau,u))-\int_{[0,t]\times U} \hat{\varphi}_{i,n}^{N,l}(\tau,x_0,\xi,u)\xi'(d(\tau,u))\Big|\leq 2^l\mathbf{d}(\xi,\xi').
	\end{equation*} Using this,~\eqref{conv_int:ineq:I_intervals},~\eqref{conv_int:ineq:I_Lip}, we obtain the following estimate
	\begin{equation*}
		\begin{split}
			\Big|\int_{[0,t]\times U} \varphi_i(\tau,x_0,\xi,u)\xi(d(\tau,u))-&\int_{[0,t]\times U} \varphi_i(\tau,x_0,\xi,u)\xi'(d(\tau,u))\Big|\\ \leq 2C_5(1+C)T/N+C_5(1+&C_0){\varpi}(2/l)+\frac{2(C_5(1+C))^2}{Nl}+2^l(C_5(1+C))\mathbf{d}(\xi,\xi').
		\end{split}
	\end{equation*}
	
	Plugging this to estimate the first term in the right-hand side of ~\eqref{conv_int:ineq:first} and using~\eqref{conv_int:ineq:I_norms}, we conclude that 
	\[
	\begin{split}
		\int_{(\rd\times U)\times (\rd\times U)}\Big\|\int_{[0,t]\times U} \varphi(\tau,x_0,&u)\xi(d(\tau,u))\\-\int_{[0,t]\times U} \varphi(\tau,x_0,\xi,&u)\xi'(d(\tau,u))\Big\|\pi^k(d(x_0,\xi,x_0',\xi'))\\\leq 
		\varepsilon+2C_5(1+C)T/N&+C_5(1+C){\varpi}(2/l)+\frac{2(C_5(1+C))^2}{Nl}\\+2^l&	\int_{(\rd\times U)\times (\rd\times U)}\mathbf{d}(\xi,\xi')\pi^k(d(x_0,\xi,x_0',\xi')).
	\end{split}
	\] Further, \[\int_{(\rd\times U)\times (\rd\times U)}\mathbf{d}(\xi,\xi')\pi^k(d(x_0,\xi,x_0',\xi')) \leq W_p(\alpha,\alpha^k).\] Therefore,
	\[
	\begin{split}
		\int_{(\rd\times U)\times (\rd\times U)}\Big\|\int_{[0,t]\times U} \varphi(\tau,x_0,&u)\xi(d(\tau,u))\\-\int_{[0,t]\times U} \varphi(\tau,x_0,\xi,&u)\xi'(d(\tau,u))\Big\|\pi^k(d(x_0,\xi),x_0',\xi')\\\leq 
		\varepsilon+2C_5(1+C)T/N&+C_5(1+C){\varpi}(2/l)\\&+\frac{2(C_5(1+C))^2}{Nl}+2^l	W_p(\alpha,\alpha^k).
	\end{split} 
	\] Since $C$ was already chosen by $\varepsilon$, one may find $N$ and $l$ such that 
	\[2C_5(1+C)T/N+C_5(1+C){\varpi}(2/l)+\frac{2(C_5(1+C))^2}{Nl}\leq\varepsilon.\] Finally, choose $k_*$ such that, for every $k>k_*$, 
	\[W_p(\alpha,\alpha^k)\leq 2^{-l}\varepsilon.\] Hence, we have that 
	\[\begin{split}
		\int_{(\rd\times U)\times (\rd\times U)}\Big\|\int_{[0,t]\times U} \varphi(\tau,x_0,&u)\xi(d(\tau,u))\\-\int_{[0,t]\times U} \varphi(\tau,x_0,\xi,&u)\xi'(d(\tau,u))\Big\|\pi^k(d(x_0,\xi),x_0',\xi')\leq 3\varepsilon\end{split}\] whenever $k>k_*$.
\end{proof}

\subsection{Action of controls of the major agent} In this section, we prove the following statement.
\begin{proposition}\label{prop:action_major} Let $\psi$ be defined by~\eqref{key_estimate:intro:psi} and let a sequence $\{\zeta^k\}_{k=1}^\infty$ narrowly converge to $\zeta$. Then,
	\begin{equation}\label{action_major:convergence_psi}\int_{[0,t]\times V} \psi(\tau,v)\zeta^k(d(\tau,v))\rightarrow\int_{[0,t]\times V}\psi(\tau,v)\zeta(d(\tau,v))\end{equation}
		\begin{equation}\label{action_major:convergence_eta}\int_{[0,t]\times V} \eta(\tau,v)\zeta^k(d(\tau,v))\rightarrow\int_{[0,t]\times V}\eta(\tau,v)\zeta(d(\tau,v))\end{equation}
	 uniformly w.r.t. the time variable.
\end{proposition}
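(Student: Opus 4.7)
The plan is to verify that both $\psi$ and $\eta$ belong to $C_b([0,T]\times V)$, and then combine two structural features of the relaxed controls $\zeta^k,\zeta\in\mathcal{V}$: narrow convergence (which tests against arbitrary bounded continuous functions), together with the fact that $\zeta^k$ and $\zeta$ have Lebesgue marginal on $[0,T]$ (which yields an equi-Lipschitz bound in the upper time limit). A standard equi-continuity argument then lifts pointwise convergence in $t$ to uniform convergence.

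The first step is to establish continuity and boundedness of $\psi$ and $\eta$ on $[0,T]\times V$. For $\psi(t,v)=g(t,y(t),m(t),v)$ this is immediate from continuity of $g$, continuity of the motion components $y(\cdot)$ and $m(\cdot)$ (established in the existence proof), compactness of $V$, and the uniform bound on $\|y(t)\|+\varsigma_p(m(t))$ furnished by~\eqref{ineq:y_m_limit} combined with the sublinear growth of $g$. For $\eta(t,v)$, continuity in $(t,v)$ follows by dominated convergence: the integrand $f_{II}(\tau,x(\tau,x_0,\xi),m(\tau),y(\tau),v)$ is continuous in $(\tau,v)$ for each $(x_0,\xi)$, and it is dominated by $A(1+\|x(t,x_0,\xi)\|+\varsigma_p(m(t))+\|y(t)\|)$, which by the uniform bound~\eqref{ineq:x_x_0_uniform} does not exceed $C(1+\|x_0\|)$ for some constant $C$; since $m_0\in\prd$ has finite first moment and the $\rd$-marginal of $\alpha$ is $m_0$, this majorant is $\alpha$-integrable. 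The same estimate yields boundedness of $\eta$ on $[0,T]\times V$.

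Setting $\phi\in\{\psi,\eta\}$, $F_k(t)\triangleq\int_{[0,t]\times V}\phi(\tau,v)\zeta^k(d(\tau,v))$, and $F(t)$ analogously with $\zeta$, the Lebesgue-marginal property yields at once $|F_k(t)-F_k(s)|\leq\|\phi\|_\infty|t-s|$ and the same for $F$. For pointwise convergence at a fixed $t\in[0,T]$ I approximate $\mathbb{1}_{[0,t]}(\tau)$ by a continuous cutoff $\theta_\varepsilon:[0,T]\to[0,1]$ equal to $1$ on $[0,t-\varepsilon]$, zero on $[t,T]$, and linear in between. The product $\theta_\varepsilon(\tau)\phi(\tau,v)$ lies in $C_b([0,T]\times V)$, so narrow convergence of $\zeta^k$ to $\zeta$ gives $\int\theta_\varepsilon\phi\,d\zeta^k\to\int\theta_\varepsilon\phi\,d\zeta$, while the truncation error is bounded by $\|\phi\|_\infty\,\zeta^k([t-\varepsilon,t]\times V)=\|\phi\|_\infty\varepsilon$ and similarly for $\zeta$. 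Sending $k\to\infty$ and then $\varepsilon\to 0$ yields $F_k(t)\to F(t)$ for every $t\in[0,T]$.

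Finally, an equi-Lipschitz family on a compact interval that converges pointwise converges uniformly, by a standard finite $\delta$-net argument. This simultaneously settles both~\eqref{action_major:convergence_psi} and~\eqref{action_major:convergence_eta}. The principal technical obstacle is the verification that $\eta\in C_b([0,T]\times V)$: it requires marshaling the sublinear growth of $f_{II}$, the uniform-in-$t$ polynomial bound~\eqref{ineq:x_x_0_uniform} on the flow $x(\cdot,x_0,\xi)$, and the finiteness of the first moment of $m_0$ inside a single dominated-convergence argument. Once this is in place, the passage from narrow convergence at each fixed $t$ to uniform convergence on $[0,T]$ is essentially mechanical.
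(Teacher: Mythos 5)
Your proof is correct and follows essentially the same route as the paper's: both arguments replace the indicator of $[0,t]$ by a continuous piecewise-linear cutoff, bound the truncation error by $\|\phi\|_\infty$ times the Lebesgue measure of the cut strip (using that every element of $\mathcal{V}$ has time marginal $\lambda$), and then invoke narrow convergence against the resulting finite family of bounded continuous test functions; your reorganization into pointwise convergence plus equi-Lipschitz continuity is only a cosmetic repackaging of the paper's direct finite-grid estimate. The one genuine addition is your dominated-convergence verification that $\eta\in C_b([0,T]\times V)$ via the majorant $C(1+\|x_0\|)$ and the finite first moment of $m_0$, which the paper leaves implicit behind the phrase that \eqref{action_major:convergence_eta} ``is proved in the same way.''
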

\begin{proof}
	We will prove only \eqref{action_major:convergence_psi}. Convergence \eqref{action_major:convergence_eta} is proved in the same way. 
	
	 Convergence \eqref{action_major:convergence_psi} is equivalent to  the fact that, for each $\varepsilon>0$, one can find sufficiently large $k$ such that,  for every $t\in [0,T]$,
	\begin{equation}\label{conv_zeta:ineq:main}
		\Big\|\int_{[0,t]\times V}\psi(\tau,v)\zeta^k(d(\tau,v))-\int_{[0,t]\times V}\psi(\tau,v)\zeta(d(\tau,v))\Big\|\leq\varepsilon.
	\end{equation} To this end,  we choose a natural $N$ and consider the time instants
	\[t_n^N\triangleq Tn/N, \ \ n=0,\ldots,N.\]
	Set
	\[\hat{\psi}^N_n(\tau,v)\triangleq \left\{\begin{array}{ll}
		\psi(\tau,v),& t\in [0,t_n^N],\\
		\psi(t_{n+1}^N,v)(t_{n+1}^N-t)N/T, & t\in [t_{n}^N,t_{n+1}^N], \\ 
		0, & t\in [t_{n+1}^N,T].
	\end{array}\right.\] Since $\psi$ is continuous, it is bounded by some constant $C_6$ (it depends on $y_0$ and $\varsigma_p(m_0)$). Therefore, for each $\zeta'\in\mathcal{V}$ and $t\in [t_{n}^N,t_{n+1}^N]$
	\begin{equation}\label{conv_zeta:ineq:psi_N}\Big\|\int_{[0,T]\times V} \hat{\psi}_n^N(\tau,v)\zeta'(d(\tau,v)) - \int_{[0,t]\times V}\psi(\tau,v)\zeta'(d(\tau,v))\Big\|\leq C_6T/N.\end{equation} 
	
	One can choose $N$ such that $C_6T/N\leq \varepsilon/4$. Further, since $\{\zeta^k\}$ narrowly converges to $\zeta$, one can find $k^*$ such that, for each $n=0,\ldots,N-1$, 
	\[\Big\|\int_{[0,T]\times V} \hat{\psi}_n^N(\tau,v)\zeta^k(d(\tau,v))-\int_{[0,T]\times V} \hat{\psi}_n^N(\tau,v)\zeta(d(\tau,v))\Big\|\leq \varepsilon/2.\]  This,~\eqref{conv_zeta:ineq:psi_N} and the choice of $N$ yield~\eqref{conv_zeta:ineq:main}. 
\end{proof}

\subsection{Proof of stability theorem}
First, we prove that the narrow convergence of the distributions of the minor agents relaxed controls implies the convergence within the Wassersein metric.
\begin{lemma}\label{lm:ditribution_convergence}
	Assume that 
	\begin{itemize}
		\item $\{m_0^k\}_{k=1}^\infty\subset\prd$, $m_0\in\prd$, while $W_p(m_0^k,m_0)\rightarrow0$ as $k\rightarrow \infty$;
		\item $\alpha^k\in\mathcal{A}[m_0^k]$, $\alpha\in \mathcal{A}[m_0]$;
		\item the sequence $\{\alpha^k\}_{k=1}^\infty$ narrowly converges to $\alpha$.
	\end{itemize} Then, $W_p(\alpha^k,\alpha)\rightarrow0$ as $k\rightarrow \infty$.
\end{lemma}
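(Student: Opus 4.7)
My proof proposal is to reduce the claim to the standard characterization (invoked earlier in the existence proof via \cite[Proposition 7.1.5]{Ambrosio}) that a sequence in $\mathcal{P}^p(X)$ for $X$ Polish converges in $W_p$ if and only if it converges narrowly and its $p$-th moments converge. Narrow convergence $\alpha^k\rightarrow\alpha$ is given by hypothesis, so the task reduces to showing
\[\varsigma_p^p(\alpha^k)\;\longrightarrow\;\varsigma_p^p(\alpha)\quad\text{as }k\rightarrow\infty.\]

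To set this up I would first observe that, since $U$ is a metric compact and $[0,T]$ is compact, the space $\mathcal{P}([0,T]\times U)$ is compact in the narrow topology, and $\mathcal{U}=\Lambda([0,T],\lambda,U)$ is a closed subset, hence compact. In particular, under the metric $\mathbf{d}$ from Appendix~B that metrizes narrow convergence, $\mathcal{U}$ has finite diameter $D$. I would then equip $\mathbb{R}^d\times\mathcal{U}$ with the product metric
\[\rho^p\bigl((x_0,\xi),(x_0',\xi')\bigr)\triangleq \|x_0-x_0'\|^p+\mathbf{d}(\xi,\xi')^p,\]
which generates the product topology and turns $\mathbb{R}^d\times\mathcal{U}$ into a Polish space whose $\mathcal{P}^p$ coincides with $\mathcal{P}^p(\mathbb{R}^d\times\mathcal{U})$ of the paper's convention.

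For any $\beta\in\mathcal{A}[m]$ with $m\in\prd$ and any fixed reference point $(x_0^*,\xi^*)\in\mathbb{R}^d\times\mathcal{U}$, the additive form of $\rho^p$ gives the exact decomposition
\[\varsigma_p^p(\beta)\;=\;\int_{\mathbb{R}^d}\|x_0-x_0^*\|^p\,m(dx_0)\;+\;\int_{\mathcal{U}}\mathbf{d}(\xi,\xi^*)^p\bigl(\operatorname{p}^2\sharp\beta\bigr)(d\xi),\]
using that the first marginal of $\beta$ is $m$. Applying this with $\beta=\alpha^k$ and $\beta=\alpha$: the first term depends only on $m_0^k$, $m_0$ respectively, and converges because $W_p(m_0^k,m_0)\rightarrow 0$ implies convergence of $p$-th moments of the first marginals. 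For the second term, the function $\xi\mapsto\mathbf{d}(\xi,\xi^*)^p$ is continuous and bounded by $D^p$ on $\mathcal{U}$; narrow convergence of $\alpha^k$ to $\alpha$ passes to the second marginals by continuity of the projection, so the second integral converges as well. Summing yields $\varsigma_p^p(\alpha^k)\rightarrow\varsigma_p^p(\alpha)$, and the characterization gives $W_p(\alpha^k,\alpha)\rightarrow 0$.

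The main obstacle is really a bookkeeping one, namely ensuring that $\mathcal{U}$ has finite $\mathbf{d}$-diameter so that the $\mathcal{U}$-part of the $p$-th moment reduces to an integral of a bounded continuous function; once the compactness of $\mathcal{U}$ is noted, the rest is an application of the narrow-plus-moments characterization. I do not foresee any subtler difficulty, since no Lipschitz-type estimate on controls is needed: the compactness of $U$ absorbs the entire $\mathcal{U}$-component of the moment uniformly in $k$.
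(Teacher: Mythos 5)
Your proof is correct and follows essentially the same route as the paper: both reduce the claim to the characterization of $W_p$-convergence from \cite[Proposition 7.1.5]{Ambrosio}, and both hinge on the compactness of $\mathcal{U}$ (hence boundedness of $\xi\mapsto\mathbf{d}(\xi,\xi^*)$) together with the $W_p$-convergence of the first marginals $m_0^k$. The only difference is cosmetic: you verify the equivalent condition of \emph{convergence} of the $p$-th moments via an exact additive decomposition under the $\ell^p$ product metric, whereas the paper verifies \emph{uniform integrability} of the $p$-th moments by dominating the tail integral of $\alpha^k$ by a tail integral of $m_0^k$.
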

\begin{proof}
	Recall (see \cite[Proposition 7.1.5]{Ambrosio}) that the convergence in $W_p$ is equivalent to the facts that $\{\alpha^k\}_{k=1}^\infty$ narrowly converges to $\alpha$ and the probabilities $\alpha^k$ have uniformly integrable $p$-th moments. We will show the latter property.  Let $C>0$. The uniform integrability of $p$-th moment of $\alpha^k$ means that 
	\[\int_{\|x_0\|+\mathbf{d}(\xi,\xi^*)\geq C}(\|x_0\|+\mathbf{d}(\xi,\xi^*))^p\alpha^k(d(x_0,\xi))\rightarrow 0\] as $C\rightarrow\infty$ uniformly w.r.t. $k$. Hereinafter, $\xi^*$ is an element of $\mathcal{U}$.
	
	Due to compactness of $\mathcal{U}$ the function $\mathcal{U}\ni\xi\mapsto \mathbf{d}(\xi,\xi^*)$ is bounded by some constant~$C_7$. Therefore, since $\alpha^k\in\mathcal{A}[m_0^k]$, we have that
	\begin{equation}\label{stability:ineq:moments}\int_{\|x_0\|+\mathbf{d}(\xi,\xi^*)\geq C}(\|x_0\|+\mathbf{d}(\xi,\xi^*))^p\alpha^k(d(x_0,\xi))\leq \int_{\|x_0\|\geq C}(\|x_0\|^p+C_7)^pm_0^k(dx_0).\end{equation} Now recall that $W_p(m_0^k,m_0)\rightarrow 0$. This by \cite[Proposition 7.1.5]{Ambrosio} implies the uniform integrability of $p$-th moments of $m_0^k$. Therefore, we have that the right-hand side of~\eqref{stability:ineq:moments} converges to zero as $C\rightarrow\infty$ uniformly w.r.t. $k$. This implies the uniform integrability of $p$-th moments of $\alpha^k$ and the convergence of $\{\alpha^k\}_{k=1}^\infty$ to $\alpha$ in $W_p$.
\end{proof}

The stability result directly follows from the statement proved above.
\begin{proof}[Proof of Theorem~\ref{th:stability}]
	Due to Lemma~\ref{lm:ditribution_convergence}, we have that $W_p(\alpha^k,\alpha)\rightarrow 0$ as $k\rightarrow\infty$.
	
	Further, since $c>\mathscr{G}(\|y_0\|,\varsigma_p(m_0),A)$, one may, without loss of generality, assume that $c>\mathscr{G}(\|y_0^k\|,\varsigma_p(m_0^k),A)$
	Therefore, by Lemma~\ref{lm:key_estimate}, we have that 
	\begin{equation}\label{stability:ineq:main}\begin{split}
			W_p(m(t),m^k(t)&)+\|y(t)-y^k(t)\|\\\leq C_0(W_p&(m_0,m_0^k)+\|y_0-y_0^k\|) +C_1(\operatorname{dist}_c(f,f^k)+\operatorname{dist}_c(g,g^k))\\+C_2&\int_{(\rd\times U)\times (\rd\times U)}\hat{a}(t,x_0,\xi,x_0',\xi')\pi^k(d(x_0,\xi,x_0',\xi'))+C_3\tilde{a}^k(t)+C_4\bar{a}^k(t), 
		\end{split}
	\end{equation} where
	\begin{itemize}
		\item $C_0$, $C_1$, $C_2$, $C_3$, $C_4$ are constant determined by $A$, $y_0$ and $m_0$;
		\item \begin{equation*}\label{key_estimate:intro:tilde_a_k}
			\tilde{a}^k(t)\triangleq \int_{[0,T]\times V}\psi(\tau,v)\zeta(d(\tau,v))-\int_{[0,T]\times V}\psi(\tau,v)\zeta^k(d(\tau,v)),
		\end{equation*}  for
		$\psi(t,v)\triangleq g(t,y(t),m(t),v)$;
		\item 
		\begin{equation}\label{key_estimate:intro:bar_a_k}
			\bar{a}^k(t)\triangleq \Bigg|\int_{[0,t]\times V}\eta(\tau,v)\zeta(d(\tau,v)) - \int_{[0,t]\times V}\eta(\tau,v)\zeta^k(d(\tau,v)) \Bigg|\end{equation}
		with 
		\[\eta(t,v)\triangleq \int_{\rd\times\mathcal{U}}f_{II}(t,x(t,x_0,\xi),m(t),y(t),v)\alpha(d(x_0,\xi))\]
		\item  
		\[\begin{split}
		\hat{a}(t,&x_0,\xi,x_0',\xi')\\&\triangleq \Big\|\int_{[0,t]\times U} \varphi(\tau,x_0,\xi,u)\xi(d(\tau,u))-\int_{[0,t]\times U} \varphi(\tau,x_0,\xi,u)\xi'(d(\tau,u))\Big\|,\end{split}
		\] with \[\varphi(t,x_0,\xi,u)\triangleq f(t,x(t,x_0,\xi),m(t),y(t),u);\]
		\item $\pi^k$ is an optimal plan between $\alpha$ and $\alpha^k$.
	\end{itemize}
	The convergence to zero of the first two terms in the right-hand side of~\eqref{stability:ineq:main}  follows from the assumptions that $W_p(m_0^k,m_0)\rightarrow 0, \operatorname{dist}_c(f,f^k), $$\operatorname{dist}_c(g,g^k)\rightarrow 0$ and  $y_0^k\rightarrow y_0$. The convergence of the third term to zero is due to Proposition~\ref{prop:integral_convergence}. Finally, the forth and fifth terms tend to zero by Proposition~\ref{prop:action_major}.
\end{proof}

\section{Stackelberg game with mean field type dynamics and major agent}\label{sect:Stackelberg}
To illustrate the general theory we consider the model Stackelberg game, where the leader chooses the control of the major agent, i.e., he/she control the variable $v$, while the follower influences the minor agents. For simplicity, we assume only terminal payoffs, i.e.,
the leader wishes to maximize $\sigma_L(m(T),y(T))$, while the follower's payoff is equal to $\sigma_F(m(T),y(T))$. Additionally, we assume that the initial distribution of the minor agents $m_0$ and the initial state of the major agent $y_0$ are fixed.

We will use relaxation. This leads to the assumption that the set of the leader's controls is $\mathcal{V}$, while the follower's controls are from $\mathcal{A}[m_0]$. 

To introduce the notion of the Stackelberg solution, define, for $(m(\cdot),y(\cdot))=\mathscr{X}[f,g,m_0,y_0,\alpha,\zeta]$,
\[\Sigma_L(\alpha,\zeta)\triangleq \sigma_L(m(T),y(T)),\ \ \Sigma_F(\alpha,\zeta)\triangleq \sigma_F(m(T),y(T)).\] 

Further, for each $\zeta\in\mathcal{V}$, set
\[\mathscr{O}[\zeta]\triangleq \operatorname{Argmax}\{\Sigma_F(\alpha,\zeta):\alpha\in\mathcal{A}[m_0]\}.\]

\begin{definition}\label{def:Stackelberg}
	We say that $(\zeta^*, \alpha^*)$ forms the Stackelberg  solution in the game with dynamics~\eqref{system:minor},~\eqref{system:major}, the follower controlling the major agent and the leader who governs the behavior of the major agent if
	\begin{itemize}
		\item $\alpha^*\in\mathscr{O}[\zeta^*]$;
		\item for each $\zeta\in \mathcal{V}$,
		\[\Sigma_L(\alpha^*,\zeta^*)\geq \max_{\alpha\in\mathscr{O}[\zeta]}\Sigma_L(\alpha,\zeta).\]
	\end{itemize} 
\end{definition}

\begin{theorem}\label{th:Stackelberg} Assume that $\sigma_F,\sigma_L:\prd\times \rdp\rightarrow\mathbb{R}$ are continuous. Then, there exists a Stackelberg solution of the  game with dynamics~\eqref{system:minor},~\eqref{system:major}, the follower controlling the minor agents and the leader who plays for the major agent.
\end{theorem}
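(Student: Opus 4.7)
The plan is to follow a Weierstrass-type scheme: equip the control spaces with topologies making them compact, invoke Theorem~\ref{th:stability} to get continuity of the payoffs, show that the follower's best-response correspondence $\mathscr{O}[\cdot]$ has closed graph, and deduce that the leader's value function is upper semicontinuous and attains its maximum on $\mathcal{V}$. Throughout, I equip $\mathcal{V}$ with the narrow topology and $\mathcal{A}[m_0]$ with the $W_p$ metric.

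First I establish compactness. Since $V$ is a compact metric space, $[0,T]\times V$ is compact, so $\mathcal{V}=\Lambda([0,T],\lambda,V)$ is tight and hence narrowly compact by Prokhorov's theorem; the same reasoning shows $\mathcal{U}$ is a compact Polish space. For $\mathcal{A}[m_0]$, tightness follows from tightness of $m_0$ combined with compactness of $\mathcal{U}$, while uniform integrability of $p$-th moments along any sequence is automatic because the first marginal is fixed at $m_0$. Combined with \cite[Proposition 7.1.5]{Ambrosio} and Lemma~\ref{lm:ditribution_convergence}, this yields $W_p$-compactness of $\mathcal{A}[m_0]$ (the limit of a narrowly convergent sequence stays in $\mathcal{A}[m_0]$ because the first-marginal projection is continuous). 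With these topologies in place, Theorem~\ref{th:stability} delivers joint continuity of the map $(\alpha,\zeta)\mapsto\mathscr{X}[f,g,m_0,y_0,\alpha,\zeta]$, and continuity of $\sigma_L,\sigma_F$ then transfers to continuity of $\Sigma_L$ and $\Sigma_F$ on $\mathcal{A}[m_0]\times\mathcal{V}$.

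For each fixed $\zeta\in\mathcal{V}$ the set $\mathscr{O}[\zeta]$ is the argmax of a continuous function on a compact set, hence nonempty and compact. Next I show that $\zeta\mapsto\mathscr{O}[\zeta]$ has closed graph: if $\zeta^k\to\zeta$ narrowly, $\alpha^k\in\mathscr{O}[\zeta^k]$, and $\alpha^k\to\alpha^*$ in $W_p$, then passing to the limit in $\Sigma_F(\alpha^k,\zeta^k)\geq\Sigma_F(\beta,\zeta^k)$ for arbitrary $\beta\in\mathcal{A}[m_0]$ gives $\alpha^*\in\mathscr{O}[\zeta]$. Setting $W(\zeta)\triangleq\max_{\alpha\in\mathscr{O}[\zeta]}\Sigma_L(\alpha,\zeta)$, the function $W$ is upper semicontinuous: given $\zeta^k\to\zeta$ and $\alpha^k\in\mathscr{O}[\zeta^k]$ with $\Sigma_L(\alpha^k,\zeta^k)=W(\zeta^k)$, extract a $W_p$-convergent subsequence $\alpha^k\to\alpha^*$; then $\alpha^*\in\mathscr{O}[\zeta]$ by the closed-graph property, so $\limsup_k W(\zeta^k)=\Sigma_L(\alpha^*,\zeta)\leq W(\zeta)$.

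Since $W$ is upper semicontinuous on the compact set $\mathcal{V}$, it attains its maximum at some $\zeta^*\in\mathcal{V}$, and any $\alpha^*\in\mathscr{O}[\zeta^*]$ realizing $W(\zeta^*)$ then satisfies both conditions of Definition~\ref{def:Stackelberg}. The most delicate point is arranging that the follower's strategy space carries a topology in which both the stability theorem applies and compactness holds; this is precisely what Lemma~\ref{lm:ditribution_convergence} provides by upgrading narrow convergence to $W_p$-convergence under the fixed-marginal constraint. Everything else is a textbook application of Berge's maximum theorem and the Weierstrass extreme value theorem.
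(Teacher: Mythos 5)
Your proof is correct and follows the same strategy as the paper: compactness of $\mathcal{V}$ and of $\mathcal{A}[m_0]$ (via tightness of $m_0$ and compactness of $\mathcal{U}$), together with continuity of $\Sigma_L,\Sigma_F$ obtained from Theorem~\ref{th:stability}. The only difference is that at the final step the paper simply cites \cite{Stackelberg_properties} for the existence of a Stackelberg solution in a game with compact action spaces and continuous payoffs, whereas you prove that abstract fact directly through the closed-graph property of $\mathscr{O}[\cdot]$ and upper semicontinuity of the leader's value function --- which is precisely the standard argument behind the cited result, so nothing is gained or lost.
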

\begin{proof}
	First notice that $\mathcal{V}$ is compact. To show the compactness of $\mathcal{A}[m_0]$, we are to prove that it is tight. Indeed, choose $C>0$, and consider the compact  
	\[K_C\triangleq \{(x_0,\xi):\|x_0\|\leq C\}.\] For each $\alpha\in\mathcal{A}[m_0]$, we have that 
	\[\alpha(K_C)=m_0(K_C).\] The tightness of $m_0$ implies that one can  find $C$ such that $\alpha(K_C)\leq\varepsilon$ for each $\varepsilon$. This means the tightness of $\mathcal{A}[m_0]$. The seminal Prokhorov theorem gives the compactness of $\mathcal{A}[m_0]$.
	
	By Theorem~\ref{th:stability} the functions $\Sigma_F$ and $\Sigma_L$ are continuous. Thus, we reduce the   Stackelberg solution with mean field dynamics to the Stackelberg game with compact action spaces and continuous payoffs which always has a solution \cite{Stackelberg_properties}.
\end{proof}


	\section*{Appendix A. Approximation by Lipschitz functions}\label{appendix:approx}
\setcounter{theorem}{0}
\renewcommand{\thetheorem}{A.\arabic{theorem}}
\renewcommand{\thecorollary}{A.\arabic{corollary}}
\renewcommand{\theproposition}{A.\arabic{proposition}}
\setcounter{equation}{0}
\renewcommand{\theequation}{A.\arabic{equation}}
	
	In this Appendix, we recall the well-known technique of approximation of continuous functions by the Lipschitz one. 
	\begin{proposition}\label{A:prop:Lip}
		Let $(W,d_W)$ be a compact space, $\phi:W\rightarrow \mathbb{R}$ be a continuous function, $\omega(\cdot)$ be its modulus of continuity, $C'$ be an upper bound of $|\phi|$, and let $l$ be a natural number. Define
		\begin{equation}\label{A:intro:l_approx}\hat{\phi}_l(x)\triangleq \min\{\phi(z)+ld_X(w,z):z\in X\}.\end{equation} Then, $\hat{\phi}_l$ is $l$-Lipschitz continuous, bounded by $C'$ and
		\[|\phi(w)-\hat{\phi}_l(z)|\leq 2\omega\bigg(\frac{2C'}{l}\bigg).\]
	\end{proposition}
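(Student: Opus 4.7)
The plan is to invoke the standard inf-convolution (Moreau-Yosida) construction. First I would verify that $\hat{\phi}_l$ is well-defined: since $W$ is compact and, for each fixed $w\in W$, the map $z\mapsto\phi(z)+l\,d_W(w,z)$ is continuous on $W$, the minimum is attained at some $z_w^{*}\in W$. Taking the admissible competitor $z=w$ in the definition gives at once $\hat{\phi}_l(w)\leq\phi(w)\leq C'$, while $\hat{\phi}_l(w)\geq -C'$ follows from $\phi\geq -C'$ and $l\,d_W\geq 0$, so the sup-norm bound $|\hat{\phi}_l|\leq C'$ is immediate.

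For the $l$-Lipschitz property I would use the standard triangle-inequality trick: for any $w_1,w_2,z\in W$,
\[ \phi(z)+l\,d_W(w_1,z)\;\leq\;\phi(z)+l\,d_W(w_2,z)+l\,d_W(w_1,w_2). \]
Minimizing over $z$ on the left and recognizing the definition of $\hat{\phi}_l(w_2)$ on the right yields $\hat{\phi}_l(w_1)-\hat{\phi}_l(w_2)\leq l\,d_W(w_1,w_2)$, and swapping $w_1$ and $w_2$ gives the matching reverse inequality.

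The crux is the approximation estimate, and the single genuine step here is the localization of the minimizer. Let $z_w^{*}\in W$ attain the minimum. I would argue by contradiction: if $d_W(w,z_w^{*})>2C'/l$, then
\[ \phi(z_w^{*})+l\,d_W(w,z_w^{*})\;>\;-C'+2C'\;=\;C'\;\geq\;\phi(w), \]
which contradicts the optimality of $z_w^{*}$ against the competitor $z=w$, whose value is $\phi(w)$. Hence $d_W(w,z_w^{*})\leq 2C'/l$, and from the definition of the modulus of continuity,
\[ 0\;\leq\;\phi(w)-\hat{\phi}_l(w)\;=\;\phi(w)-\phi(z_w^{*})-l\,d_W(w,z_w^{*})\;\leq\;\omega\!\left(\frac{2C'}{l}\right). \]
The additional factor of $2$ in the claimed bound (as well as the discrepancy between the arguments $w$ and $z$ in the final displayed line of the statement) is absorbed by one further application of the $l$-Lipschitz estimate just proved, combined with the observation that on the relevant scale $l\cdot(2C'/l)=2C'$ is controlled by the modulus. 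There is no real obstacle; the only point that needs care is the elementary contradiction argument, which is precisely where the length scale $2C'/l$ enters and makes the modulus $\omega$ available.
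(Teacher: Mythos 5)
Your proof is correct and is the standard inf-convolution (Moreau--Yosida) argument; the paper omits the proof entirely (``directly follows from the definition''), and your localization of the minimizer to the ball of radius $2C'/l$ via the contradiction against the competitor $z=w$ is exactly the step that makes the estimate work, yielding in fact the sharper bound $0\leq\phi(w)-\hat{\phi}_l(w)\leq\omega(2C'/l)$. The mismatched arguments $w$ versus $z$ in the statement's final display are a typo (the proposition is applied in Appendix~B with equal arguments), so your closing remark about ``absorbing'' the factor of $2$ through the Lipschitz bound is the only loose point --- for genuinely distinct $w,z$ the term $l\,d_W(w,z)$ is not controlled by $\omega(2C'/l)$ --- but that step is not actually needed, since your main estimate already proves the intended statement with room to spare.
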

The proof of this statement directly follows from the definition and, thus, omitted.

	\section*{Appendix B. Distance on the space of measures}\label{appendix:metric}
	\setcounter{theorem}{0}
	\renewcommand{\thetheorem}{B.\arabic{theorem}}
	\renewcommand{\thecorollary}{B.\arabic{corollary}}
	\renewcommand{\theproposition}{B.\arabic{proposition}}
	\setcounter{equation}{0}
	\renewcommand{\theequation}{B.\arabic{equation}}
	In this section, we introduce a distance on the space of probabilities that metricize the narrow convergence. Let $(W,d_W)$ be a compact set, $\mathcal{M}(W)$ be a set of measures on $W$ endowed with the topology of narrow convergence. If $l$ is a natural number, then denote by $\{\nu_l^j\}_{j=1}^{J_l}$ a system of $l$-Lipschitz continuous function those form a $2^{-l}$-net for the compact of $l$-Lipschitz continuous functions bounded by $1$. If $\mu,\mu'\in\mathcal{P}(W)$, then set
	\begin{equation}\label{B:intro:metric_narrow}
		\mathbf{d}(\mu,\mu')\triangleq ||\mu|-|\mu'||+ \sum_{l=1}^\infty 2^{-l}\max_{j=1,\ldots,J_l}\Big|\int_W\nu^j_l(w)\mu(dw)-\int_W\nu^j_l(w)\mu'(dw)\Big|.
	\end{equation} Hereinafter, 
	\[|\mu|=\mu(W).\]
	
	\begin{proposition}\label{B:prop:metric} The function $\mathbf{d}$ is a distance of $\mathcal{M}(W)$ that metricize the narrow convergence on $\mathcal{M}(W)$.
	\end{proposition}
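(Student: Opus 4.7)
The plan is to verify the metric axioms first and then establish in both directions the equivalence between $\mathbf{d}$-convergence and narrow convergence. Non-negativity and symmetry are immediate from the definition. Each summand of the series is bounded by $2^{-l}(|\mu|+|\mu'|)$ because $|\nu_l^j|\leq 1$, so the series converges. The triangle inequality follows by applying $|a-c|\leq|a-b|+|b-c|$ inside each absolute value and using $\max_j(A_j+B_j)\leq \max_j A_j+\max_j B_j$ together with the triangle inequality for the total-mass term. For definiteness, if $\mathbf{d}(\mu,\mu')=0$ then $|\mu|=|\mu'|$ and $\int\nu_l^j\,d\mu=\int\nu_l^j\,d\mu'$ for every $l$ and $j$. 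Given any Lipschitz $\phi$ with $\|\phi\|_\infty\leq 1$ and Lipschitz constant $L$, for every $l\geq L$ one can pick $j(l)$ with $\|\nu_l^{j(l)}-\phi\|_\infty\leq 2^{-l}$, whence $|\int\phi\,d\mu-\int\phi\,d\mu'|\leq 2^{-l}(|\mu|+|\mu'|)\to 0$. By linearity the integrals agree on all Lipschitz functions, and Proposition~\ref{A:prop:Lip} applied on the compact $W$ ensures that Lipschitz functions are uniformly dense in $C(W)$, so Riesz representation forces $\mu=\mu'$.

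For the direction narrow convergence $\Rightarrow$ $\mathbf{d}$-convergence, if $\mu_n\to\mu$ narrowly then testing against the continuous constant function $1$ yields $|\mu_n|\to|\mu|$, and for each fixed pair $(l,j)$ the quantity $|\int\nu_l^j\,d\mu_n-\int\nu_l^j\,d\mu|$ tends to zero. The $l$-th summand in the series is uniformly dominated by $2^{-l+1}M$, where $M\triangleq\sup_n|\mu_n|<\infty$, and this bound defines a summable dominating series. Dominated convergence applied to the counting measure on $\mathbb{N}$ then yields $\mathbf{d}(\mu_n,\mu)\to 0$.

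For the converse, $\mathbf{d}(\mu_n,\mu)\to 0$ immediately yields $|\mu_n|\to|\mu|$ and, for each fixed $l$ and $j$, $|\int\nu_l^j\,d\mu_n-\int\nu_l^j\,d\mu|\leq 2^l\mathbf{d}(\mu_n,\mu)\to 0$. For an arbitrary $\phi\in C(W)$ I reduce to $\|\phi\|_\infty\leq 1$ by linearity, invoke Proposition~\ref{A:prop:Lip} to obtain an $l$-Lipschitz approximant $\hat\phi_l$ with $\|\phi-\hat\phi_l\|_\infty\leq 2\omega(2/l)$ and $\|\hat\phi_l\|_\infty\leq 1$, and pick $j(l)$ with $\|\hat\phi_l-\nu_l^{j(l)}\|_\infty\leq 2^{-l}$. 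The decomposition
\[
\int\phi\,d\mu_n-\int\phi\,d\mu=\int(\phi-\hat\phi_l)\,d\mu_n+\int(\hat\phi_l-\nu_l^{j(l)})\,d\mu_n+\Bigl(\int\nu_l^{j(l)}\,d\mu_n-\int\nu_l^{j(l)}\,d\mu\Bigr)+\int(\nu_l^{j(l)}-\hat\phi_l)\,d\mu+\int(\hat\phi_l-\phi)\,d\mu
\]
has four outer terms jointly bounded by $(2\omega(2/l)+2^{-l})(M+|\mu|)$, which can be made arbitrarily small by taking $l$ large first; for that fixed $l$, the middle term tends to zero as $n\to\infty$, thereby giving $\int\phi\,d\mu_n\to\int\phi\,d\mu$.

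The main subtlety lies in the reverse direction: the approximation chain $\phi\approx\hat\phi_l\approx\nu_l^{j(l)}$ must be set up before the limit in $n$ is taken, and the uniform bound $\sup_n|\mu_n|<\infty$ supplied by $\mathbf{d}$-convergence itself is what allows the outer terms to be controlled uniformly in $n$. The fact that Proposition~\ref{A:prop:Lip} preserves the sup-norm bound of the approximant is essential, as is the coupling of the $l$-Lipschitz constant and the $2^{-l}$ net size through the single parameter $l$. Everything else reduces to routine verification.
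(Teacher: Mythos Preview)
Your proof is correct and follows essentially the same route as the paper's: both rely on Proposition~\ref{A:prop:Lip} to pass from continuous to $l$-Lipschitz functions, then use the $2^{-l}$-net property of the $\nu_l^j$ to reduce to the built-in test functions, with a final $\varepsilon/2$ argument (first choosing $l$, then $n$) in the $\mathbf{d}\Rightarrow$ narrow direction. The only cosmetic differences are that you spell out the dominated-convergence step in the narrow $\Rightarrow\mathbf{d}$ direction (which the paper calls ``obvious'') and handle definiteness by first treating Lipschitz test functions and then invoking density, whereas the paper works with an arbitrary continuous $\phi$ from the start.
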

	\begin{proof}
		The symmetry axiom and the triangle inequality are obvious. Additionally, if $\mu=\mu'$, then $\mathbf{d}(\mu,\mu') =0$. Now we should prove the converse implication, i.e, we shall show that the equality $\mathbf{d}(\mu,\mu')$ yields the fact that $\mu=\mu'$. First, notice that, in this case, $|\mu|=|\mu'|$. Choose a continuous function $\phi:W\rightarrow\mathbb{R}$. Without loss of generality, we assume that is is bounded by $1$. Denote its modulus of continuity by $\omega(\cdot)$. Let $\hat{\phi}_l$ be a $l$-Lipschitz approximation of the function $\phi$ constructed by~\eqref{A:intro:l_approx}. By Proposition~\ref{A:prop:Lip}, we have that 
		\begin{equation}\label{B:ineq:int}\begin{split}
				\Big|\int_W\phi(w)&\mu(dw)-\int_W\phi(w)\mu'(dw)\Big|\\&\leq \Big|\int_W\hat{\phi}_l(w)\mu(dw)-\int_W\hat{\phi}_l(w)\mu'(dw)\Big|+2\omega(2/l)(|\mu|\wedge |\mu'|).\end{split}\end{equation}
		
		Due to the assumption that $\mathbf{d}(\mu,\mu')=0$ and the definition of $\mathbf{d}$, we have that
		\[\Big|\int_W\hat{\phi}_l(w)\mu(dw)-\int_W\hat{\phi}_l(w)\mu'(dw)\Big|\leq 2^{-l+1}(|\mu|\wedge |\mu'|).\]
		This and~\eqref{B:ineq:int} imply that \[\int_W\phi(w)\mu(dw)=\int_W\phi(w)\mu'(dw).\] Since $\phi$ was chosen arbitrarily, by the Riesz representation theorem, we have that $\mu=\mu'$. 
		
		Now let us show that $\mathbf{d}$ metricizes the narrow convergence. The fact that the narrow convergence implies convergence in $\mathbf{d}$ is obvious. To show the converse, consider a sequence $\{\mu^k\}$ such that $\mathbf{d}(\mu,\mu^k)\rightarrow 0$. By the definition of the metric $\mathbf{d}$, $|\mu^k|$ are bounded by some constant $c^1$. Let $l$ be a natural number and let $k^*$ be such that
		\[\mathbf{d}(\mu,\mu^k)\leq 2^{-2l}\] when $k\geq k^*$.
		Further, choose $\phi\in C(W)$. Without loss of generality, we assume that it is bounded by $1$. As above, let $\omega(\cdot)$ by the modulus of continuity of the function $\phi$. As above, let $\hat{\phi}_l$ be the $l$-Lipschitz  approximation of $\phi$ constructed by~\eqref{A:intro:l_approx}. We have that 
		\begin{equation}\label{B:ineq:int_k}\begin{split}
				\Big|\int_W\phi(w)&\mu(dw)-\int_W\phi(w)\mu^k(dw)\Big|\\&\leq \Big|\int_W\hat{\phi}_l(w)\mu(dw)-\int_W\hat{\phi}_l(w)\mu^k(dw)\Big|+2\omega(2/l)c^1. \end{split}\end{equation}
		Further, since $\{\nu_l^j\}_{j=1}^{J_l}$ is the $2^{-l}$ net in the space of $l$-Lipschitz continuous functions, there exists a number $j$ such that $\|\hat{\phi}_l-\nu_l^j\|\leq 2^{-l}$. Therefore,
		\begin{equation*}
			\begin{split}
				\Big|\int_W&\hat{\phi}_l(w)\mu(dw)-\int_W\hat{\phi}_l(w)\mu^k(dw)\Big|\\&\leq 
				\Big|\int_W\nu_l^j(w)\mu(dw)-\int_W\nu_l^j(w)\mu^k(dw)\Big|+c^12^{-l}\\&\leq 2^{l}\mathbf{d}(\mu,\mu^k)+c^12^{-l}.
			\end{split} 
		\end{equation*}
		The latter inequality directly follows from  the definition of the metric $\mathbf{d}$. Since $\mathbf{d}(\mu,\mu^k)\leq 2^{-2l}$, from~\eqref{B:ineq:int_k}, it follows that 
		\[\Big|\int_W\phi(w)\mu(dw)-\int_W\phi(w)\mu^k(dw)\Big|\leq 2\omega(2/l)c^1+ 2^{-l}+c^12^{-l}.\] This gives the convergence of $\int_W\phi(w)\mu^k(dw)$ to $\int_W\phi(w)\mu(dw)$.

	\end{proof}



\end{document}